\DeclareOldFontCommand{\rm}{\normalfont\rmfamily}{\mathrm}
\def\R{\Bbb R}
\def\F{\Bbb F}
\def\S{\mathcal S}
\def\ad{\operatorname{ad}}
\def\dim{\operatorname{dim}}
\def\Ker{\operatorname{Ker}}
\def\Id{\operatorname{Id}}
\def\Im{\operatorname{Im}}
\def\hil{\operatorname{Nil}}
\def\g{\frak g}
\def\gl{\frak{gl}}
\def\h{\frak h}
\def\a{\frak{a}}
\def\b{\frak{b}}
\theoremstyle{plain}\swapnumbers
\newtheorem{Theorem}{Theorem}[section]
\newtheorem{Lemma}[Theorem]{Lemma}
\newtheorem{Cor}[Theorem]{Corollary}
\newtheorem{Remark}[Theorem]{Remark}
\title[On Quadratic Lie algebras]{On Quadratic Lie algebras containing the Heisenberg Lie algebra}
\author{R. Garc\'{\i}a-Delgado}
\address{Centro de Investigaci\'on en Matem\'aticas A. C.,  Unidad M\'erida; Yucat\'an, M\'exico, Carretera Sierra Papacal Chuburna Puerto Km 5, 97302 Sierra Papacal, Yuc.}
\email{rosendo.garciadelgado@gmail.com}
\keywords {Quadratic Lie algebra; Heisenberg Lie algebra; Double extension; Invariant metric.}
\subjclass{
Primary:
17B05, 17B30, 17A36.
}
\date{\today}
\begin{document}

\maketitle

\begin{abstract}
In this work we study quadratic Lie algebras that contain the Heisenberg Lie algebra $\h_m$ as an ideal. We give a procedure for constructing these kind of quadratic Lie algebras and prove that any quadratic Lie algebra $\g$ that contains the Heisenberg Lie algebra as an ideal is constructed by using this procedure. We state necessary and sufficiency conditions to determine whether an indecomposable quadratic Lie algebra is the Heisenberg Lie algebra extended by a derivation. In addition, we state necessary and sufficiency conditions to determine whether the quotient $\g/\h_m$ admits an invariant metric and we also study the case when the nilradical of the Lie algebra $\g$ is equal to $\h_m$. 
\end{abstract} 

\section*{Introduction}

A Lie algebra $(\g,[\,\cdot\,,\cdot\,])$ over a field $\F$ is called \emph{quadratic} if it admits a non-degenerate and symmetric bilinear form $B:\g \times \g \to \F$ that is \emph{invariant} under the adjoint representation of $\g$, i.e, $B(x,[y,z])=B([x,y],z)$ for all $x,y,z \in \g$. A bilinear form satisfying these conditions is called an \emph{invariant metric} on $\g$.
\smallskip

A semisimple Lie algebra is quadratic and the Killing form is its invariant metric. However, there are non-semisimple quadratic Lie algebras, an example of this is the following. Let $(\g,[\,\cdot\,,\,\cdot\,])$ be a Lie algebra and let $\ad^{\ast}:\g \to \gl(\g^{\ast})$ be its coadjoint representation. The vector space $\g \oplus \g^{\ast}$ with the bracket defined by $[x+\zeta,y+\nu]^{\prime}=[x,y]+\ad^{\ast}(x)(\nu)-\ad^{\ast}(y)(\zeta)$ and the invariant metric $B$ defined by $B(x+\zeta,y+\nu)=\zeta(y)+\nu(x)$ for all $x,y \in \g$ and $\zeta,\nu \in \g^{\ast}$, is a non-semisimple quadratic Lie algebra because $\g^{\ast}$ is a non-zero abelian ideal. In this case the invariant metric $B$ is different from the Killing form. 
\smallskip

One of the most important results in Lie theory is the classification of semisimple Lie algebras. One property sharing by semisimple Lie algebras is the existence of an invariant metric where Killing form acts as a such. %As the construction of a quadratic Lie algebra on $\g \oplus \g^{\ast}$ shows, there are non-seimisimples quadratic Lie algebras having an invariant metric different from Killing form. On the other hand, Levi-Malcev theorem proves that any Lie algebra is a semidirect sum of a semisimple subalgebra and a solvable ideal. 
Once the classification of all semisimple Lie algebras is known, it can be expected that a classification of all quadratic Lie algebras is possible. However, the problem of classifying solvable quadratic Lie algebras is as hard as classifying Lie algebras in general. Thus, there is a natural interest in obtaining structure results for non-semisimple quadratic Lie algebras because these results lead us to advance in the problem of classifying non-semisimple quadratic Lie algebras.
\smallskip

There is a procedure for constructing non-semisimples quadratic Lie algebras known as \emph{double extension} (see \cite{Figueroa} or \cite{Medina}). This procedure gives a way to construct any non-semsimple quadratic Lie algebra from another of smaller dimension. In addition, Medina-Revoy theorem proves that any indecomposable non-simple quadratic Lie algebra is a double extension (see \cite{Medina}). These results have been used to classify families of quadratic Lie algebras (see for instance \cite{Elduque}, \cite{Dat}). Another efforts have been made by considering canonical ideals for quadratic Lie algebras without simple ideals (see \cite{Kath}). In \cite{Favre} the authors introduce the concept of amalgamation of quadratic Lie algebras and give a list of nilpotent quadratic Lie algebras of dimension less than 8.
\smallskip

Quadratic Lie algebras also appear in other areas of mathematics. If a Lie group admits a bi-invariant semi-Riemannian metric, then its Lie algebra endowed with the semi-Riemannian metric becomes a quadratic Lie algebra. Reciprocally, the invariant metric of a quadratic Lie algebra induces by left-translation a bi-invariant semi-Riemannian metric on any connected Lie group with the Lie algebra. In Conformal Field Theory, quadratic Lie algebras are the Lie algebras for which a Sugawara construction exists (see \cite{Figueroa}). 
\smallskip

As we mentioned above, classifying non-semisimple quadratic Lie algebras is as hard as classifying Lie algebras in general. This leads us to study families of non-semisimple quadratic Lie algebras with some interest not only in algebra, but also in geometry and physics. One of these families is that consisting of quadratic Lie algebras containing the Heisenberg Lie algebra $\h_m$ as an ideal. 
\smallskip

Throughout this manuscript, the Heisenberg Lie algebra of dimension $2m+1$ is denoted by $\h_m=V \oplus \F \hslash$, where $V$ is a $2m$-dimensional vector space endowed with a non-degenerate skew-symmetric bilinear form $\omega:V \times V \to \F$ such that the bracket $[\,\cdot\,,\,\cdot\,]_{\h_m}$ on $\h_m$ is defined by
\begin{equation}\label{cero}
\begin{split}
& [u,v]_{\h_m}=\omega(u,v)\hslash\,\qquad \text{ for all }\,u,v \in V\,\text{ and }\\
& [u+\gamma \hslash,\hslash]_{\h_m} = 0 \qquad \text{ for all }u \in V\,\text{ and }\gamma \in \F.
\end{split}
\end{equation}
It can be proved that any invariant and symmetric bilinear form on $\h_m$ degenerates on $\hslash$. However, there is an extension of $\h_m$ that does admit an invariant metric that below we describe. 
\smallskip

Let $\phi:\h_m \to \h_m$ be a linear map satisfying 
$$
\phi(\hslash)=0,\quad \phi(V) \subset V \quad \text{ and }\quad \phi|_V \text{ is invertible. }
$$
In addition suppose that $\omega(\phi(u),v)=-\omega(u,\phi(v))$ for all $u,v \in V$, i.e, $\phi$ belongs to $\mathfrak{o}(\omega)$. In the vector space $\F \phi \oplus \h_m$ define a bracket $[\,\cdot\,,\,\cdot\,]$ by $[\phi,x]=\phi(x)$ for all $x \in \h_m$ and $[u,v]=\omega(u,v) \hslash$ for all $u,v \in V$. Observe that $\phi$ is a derivation of the Heisenberg Lie algebra $\h_m$. Then, the vector space $\F \phi \oplus \h_m=\F \phi \oplus V \oplus \F \hslash$ becomes a quadratic Lie algebra with invariant metric $B$ defined by
\begin{equation}\label{doble extension para Heisenberg}
\begin{split}
& B(u,v)=\omega(\phi^{-1}(u),v)\quad \text{ for all }u,v \in V,\\
& B(u,\phi)=B(u,\hslash)=0\quad \text{ for all }u \in V,\\
& B(\phi,\hslash)=1\quad \text{ and }\quad B(\phi,\phi)=B(\hslash,\hslash)=0. 
\end{split}
\end{equation}
We call this quadratic Lie algebra \emph{the Heisenberg Lie algebra extended by $\phi$} and we denote it by $\h_m(\phi)$. In fact, $\h_m(\phi)$ is a double extension of the abelian quadratic Lie algebra $V$ by $\phi$ (see \ref{double extension section} below). Observe that $[\h_m(\phi),\h_m(\phi)]=\h_m$ and that $\h_m(\phi)$ is a semidirect extension of $\h_m$ by $\phi$. Any derivation of $\h_m$ whose kernel is $\F \hslash$ yields a semidirect extension of $\h_m$ that admits an invariant metric. In \cite{Ovando} it is proved that abelian subalgebras of these kind of derivations of $\h_m$ describe mechanical systems for quadratic Hamiltonians of $\R^{2m}$.  
\smallskip

In Theorem \ref{teorema-doble-ext} below we give a procedure for constructing a quadratic Lie algebra that contains the Heisenberg Lie algebra as an ideal. Before that, we develop results that elucidate the structure of such quadratic Lie algebras. In Theorem \ref{teorema f} we state necessary and sufficiency conditions to determine whether a quadratic Lie algebra is isomorphic to the Heisenberg Lie algebra extended by the derivation. In Theorem \ref{teorema ind} we state necessary and sufficiency conditions so that the quotient $\g/\h_m$ admits an invariant metric. In Theorem \ref{teorema 1} we prove that if the nilradical of $\g$ is $\h_m$, then the solvable radical $\operatorname{Rad}(\g)$ is a non-degenerate ideal and it is equal to the Heisenberg Lie algebra extended by a derivation.

\section{Heisenberg Lie algebra as an ideal}

\begin{Lemma}\label{lema-aux-0}{\sl
Let $(\g,[\,\cdot\,,\,\cdot\,],B)$ be a quadratic Lie algebra such that the Heisenberg Lie algebra $\h_m=V \oplus \F \hslash$ is an ideal of $\g$. Then,
\smallskip

\textbf{(i)} $\F \hslash \subset Z(\g)$, i.e, $[\hslash,x]=0$ for all $x \in \g$.
\smallskip

\textbf{(ii)} $\h_m \subset (\F \hslash)^{\perp}$, i.e., $B(\hslash,x)=0$ for all $x \in \h_m$.
\smallskip

\textbf{(iii)} The subspace $V$ is non-degenerate.
\smallskip

\textbf{(iv)} There exists a subspace $\a$ of $\g$ such that $\a \subset V^{\perp}$ and $\g=\a \oplus \h_m$.
\smallskip

\textbf{(v)} The bracket $[\,\cdot\,,\,\cdot\,]$ of $\g=\a \oplus V \oplus \F \hslash$ has the following decomposition
\begin{equation}\label{descripcion-corchete}
\begin{split}
\forall\,a,b \in \a,\quad & [a,b]=[a,b]_{\a}+\mu(a,b)\hslash,\\
\forall\,a \in \a,\,u \in V,\quad & [a,u]=\rho(a)(u),\\
\forall\,u,v \in V,\quad & [u,v]=\omega(u,v)\hslash.
\end{split}
\end{equation}
Where the bilinear map $(a,b) \mapsto [a,b]_{\a}$ makes $\a$ into a Lie algebra isomorphic to the quotient $\g/\h_m$, $\mu:\a \times \a \to \F$ is a skew-symmetric bilinear map and $\rho:\a \to \mathfrak{o}(\omega)$ is a representation of $\a$ where $\mathfrak{o}(\omega)=\{f \in \operatorname{End}(V)\mid \omega(f(u),v)=-\omega(u,f(v))\text{ for all }u,v \in V\}$.}
\end{Lemma}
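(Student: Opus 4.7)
The plan is to address (i)--(v) roughly in order, since each step feeds into the next. For (i), since $\h_m$ is an ideal of $\g$, every restriction $\ad(x)|_{\h_m}$ is a derivation of $\h_m$ and hence preserves $\F\hslash = [\h_m,\h_m]$, so $[x,\hslash] = c(x)\hslash$ for some linear functional $c:\g \to \F$. Invariance of $B$ translates into $c(x) B(\hslash,y) + c(y) B(\hslash,x) = 0$ for all $x,y \in \g$. If some $c(x_0) \neq 0$, setting $y=x=x_0$ forces $B(\hslash,x_0)=0$, and then the identity with $y=x_0$ yields $B(\hslash,x)=0$ for every $x$, contradicting the non-degeneracy of $B$; hence $c \equiv 0$. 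For (ii), I fix any nonzero $u \in V$, pick $u' \in V$ with $\omega(u,u')=1$ so that $\hslash=[u,u']$, and compute $B(\hslash,u) = B([u,u'],u) = B(u,[u',u]) = -B(u,\hslash)$ by invariance; symmetry of $B$ then forces $B(u,\hslash)=0$. A similar one-line computation gives $B(\hslash,\hslash) = B(u,[u',\hslash]) = 0$.

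For (iii), assume $0 \neq v \in V \cap V^\perp$; by (ii), also $B(v,\hslash)=0$, so $v \in \h_m^\perp$. Pick $u \in V$ with $\omega(v,u)=1$, so that $\hslash=[v,u]$, and compute $B(\hslash,x) = B([v,u],x) = B(v,[u,x])$ by invariance. Since $\h_m$ is an ideal, $[u,x] \in \h_m$, hence $B(\hslash,x)=0$ for all $x \in \g$, contradicting non-degeneracy. Part (iv) is then routine: the non-degeneracy of $V$ gives $\g = V \oplus V^\perp$ orthogonally, (ii) gives $\F\hslash \subset V^\perp$, and any chosen linear complement $\a$ of $\F\hslash$ in $V^\perp$ satisfies $\g = \a \oplus \h_m$.

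For (v), the bracket on $V$ is $[u,v] = \omega(u,v)\hslash$ by the Heisenberg structure. For $a \in \a$ and $u \in V$, decompose $[a,u] = \phi_a(u) + \alpha_a(u)\hslash$ with $\phi_a(u) \in V$; invariance against $w \in V$ gives $B(\phi_a(u),w) = \omega(u,w) B(a,\hslash)$, determining $\phi_a$, and the derivation property of $\ad(a)|_{\h_m}$ combined with $\ad(a)(\hslash)=0$ from (i) forces $\phi_a \in \mathfrak{o}(\omega)$. The main obstacle is showing $\alpha_a \equiv 0$. Pairing $[a,u]$ against $b \in \a$ via invariance, and exploiting $a,b \in V^\perp$, yields the symmetric identity $\alpha_a(u) B(b,\hslash) + \alpha_b(u) B(a,\hslash) = 0$; setting $b=a$ gives $\alpha_a(u) B(a,\hslash)=0$. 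The case $B(a,\hslash) \neq 0$ is immediate; if $B(a,\hslash)=0$ but some $b \in \a$ has $B(b,\hslash) \neq 0$, the identity again yields $\alpha_a=0$; the residual case $\a \subset \hslash^\perp$ combined with $\a \subset V^\perp$ would give $\a \subset \h_m^\perp$, and a dimension count then forces $\h_m \cap \h_m^\perp = 0$, contradicting $\F\hslash \subset \h_m \cap \h_m^\perp$ from (ii). A parallel pairing argument (testing $[a,b]$ against $u \in V$) shows that the $V$-component of $[a,b]$ vanishes, leaving $[a,b] = [a,b]_\a + \mu(a,b)\hslash$. The Jacobi identity on $(a,b,u)$ finally gives $\rho([a,b]_\a) = [\rho(a),\rho(b)]$, and the projection $\a \to \g/\h_m$ is a Lie-algebra isomorphism, so $[\,\cdot\,,\,\cdot\,]_\a$ carries the desired Lie structure.
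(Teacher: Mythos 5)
Your proof is correct, and parts (i), (ii), (iv) follow essentially the paper's route (the paper simply cites that a one-dimensional ideal of a quadratic Lie algebra is central, where you spell out the computation). The interesting divergences are in (iii) and (v). For (iii) the paper does not argue by contradiction: it produces $d$ with $B(d,\hslash)=1$, shows the $V$-part $\rho(d)$ of $\ad(d)|_V$ is invertible, and derives the explicit identity $B(u,v)=\omega(\rho(d)^{-1}(u),v)$, from which non-degeneracy of $V$ is immediate. Your argument (a nonzero $v\in V\cap V^\perp$ lies in $\h_m^\perp$, whence $\hslash=[v,u]$ is orthogonal to all of $\g$) is shorter and more elementary, but it forgoes a byproduct the paper reuses later, namely the formula for $B|_{V\times V}$ in terms of $\omega$ that reappears in Theorem \ref{teorema-doble-ext}. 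For (v) the divergence is more substantial: the paper retains the extra terms $\omega(L(a),u)\hslash$ in $[a,u]$ and $\lambda(a,b)\in V$ in $[a,b]$ and removes them by passing to an isomorphic bracket via $\Phi(a)=a-L(a)$, whereas you show directly that once $\a\subset V^\perp$ these components vanish identically, by pairing against $\a$ and $V$ and using invariance together with (ii) and (iii). Your computation checks out (indeed it shows that for the orthogonal complement chosen in (iv) the paper's $\Phi$ is the identity), so you obtain the stated decomposition on the nose rather than after replacing the bracket by an isomorphic one --- a genuine simplification. Two minor remarks: your substitutions $b=a$ in (i) and (v) divide by $2$, so you are implicitly assuming $\operatorname{char}\F\neq 2$, as the paper does throughout; and your residual case in (v) can be dispatched faster, since $\a\subset\hslash^\perp$ together with $\h_m\subset\hslash^\perp$ from (ii) gives $\hslash\in\g^\perp=\{0\}$ outright.
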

\begin{proof}
Let $\a$ be a complementary subspace to $\h_m$ in $\g$, i.e., $\g=\a \oplus \h_m$. Since $\h_m$ is an ideal of $\g$, for every $a \in \a$ the adjoint representation $\ad(a)$ yields a linear map $R(a):\h_m \to \h_m$ defined by $[a,x]=R(x)$ for all $x \in \h_m$. From the Jacoby identity of the bracket $[\,\cdot\,,\,\cdot\,]$, it follows that the map $R(a):\h_m \to \h_m$ is a derivation of $\h_m$.
\smallskip

\textbf{(i)} We shall prove that $\F \hslash \subset Z(\g)$. Observe that the center of $\h_m$ is $\F \,\hslash$. Since the center $\F \hslash$ of $\h_m$ is invariant under any derivation of $\h_m$, then $R(a)(\F \hslash) \subset \F \hslash$ for all $a \in \a$. As $[\hslash,\h_m]=\{0\}$, then $\F\hslash$ is a one-dimensional ideal of $\g$. It is straightforward to verify that any one-dimensional ideal of a quadratic Lie algebra is contained in the center. Then, $\F \hslash \subset Z(\g)$.
\medskip

\textbf{(ii)} We shall prove that $\F h_m \subset (\F \hslash)^{\perp}$, i.e., $B(\hslash,x)=0$ for all $x \in \h_m$. Let $0 \neq u \in V$ be an arbitrary element. Because $\omega$ is non-degenerate, there exists $v \in V$ such that $[u,v]=\hslash$. Then,
$$
B(u,\hslash)=B(u,[u,v])=B([u,u],v)=0,
$$
which shows that $B(V,\hslash)=\{0\}$. In addition, as $[V,\hslash]=\{0\}$ we have
$$
B(\hslash,\hslash)=B([u,v],\hslash)=B(u,[v,\hslash])=0.
$$
Therefore, $B(\h_m,\hslash)=B(V \oplus \F \hslash,\hslash)=\{0\}$.
\medskip

\textbf{(iii)} We shall prove that the subspace $V$ is non-degenerate. Let $a \in \a$ and $u \in V$, then $[a,u]=R(a)(v) \in \h_m=V \oplus \F \hslash$. Hence, there are linear maps $\rho(a):V \to V$ and $\tau(a):V \to \F$ such that $R(a)(u)=\rho(a)(u)+\tau(a)(v)\hslash$ for all $a \in \a$ and $u \in V$.
\smallskip

As $\omega$ is a non-degenerate, for each $a \in \a$ there exists an element $L(a) \in V$ such that $\tau(a)(v)=\omega(L(a),v)$ for all $v \in V$. Then,
\begin{equation}\label{descNov2}
[a,u]=R(a)(u)=\rho(a)(u)+\omega(L(a),v)\hslash.
\end{equation}

By \textbf{(ii)} we know that $\h_m \subset (\F \hslash)^{\perp}$, then the linear map $\eta:\a \to \F$, $a \mapsto B(a,\hslash)$, is non-zero. Hence there is an element $d \in \a$ such that $B(d,\hslash)=1$, consequently $\a=\Ker(\eta) \oplus \F d$.
\smallskip

Let $a \in \a$ and $u,v \in V$. Using that $B$ is invariant and that $\h_m \subset (\F \hslash)^{\perp}$, by \eqref{descNov2} we have
\begin{equation}\label{descNov3}
B(a,[u,v])=\omega(u,v)B(a,\hslash)=B([a,u],v)=B(\rho(a)(u),v).
\end{equation}
Substituting $a$ by $d$ in \eqref{descNov3} we get $\omega(u,v)=B(\rho(d)(u),v)$. If $u$ lies in $\Ker(\rho(d))$, then $\omega(u,v)=0$ for all $v \in V$. As $\omega$ is non-degenerate, $u=0$ proving that $\rho(d):V \to V$ is invertible. Therefore,
\begin{equation}\label{descNov4}
B(u,v)=\omega(\rho(d)^{-1}(u),v)\qquad \text{ for all }u,v \in V.
\end{equation}
As $\omega$ is non-degenerate, by \eqref{descNov4} it follows that $V$ is non-degenerate.
\medskip

\textbf{(iv)} We have proved that $V$ is a non-degenerate subspace of $\g$, then $\g=V \oplus V^{\perp}$, where by \textbf{(ii)}, $\F \hslash \subset V^{\perp}$. Let $\a^{\prime}$ be a vector subspace complementary to $\F \hslash$ in $V^{\perp}$, i.e, $V^{\perp}=\a^{\prime} \oplus \F \hslash$. Then,
\begin{equation*}\label{a-ortogonal}
\g=V\oplus V^{\perp}=V \oplus (\a^{\prime} \oplus \F \hslash)=\a^{\prime} \oplus \h_m.
\end{equation*}
Therefore, from now on we assume that the complementary vector subspace $\a=\a^{\prime}$ is contained in $V^{\perp}$ and $\g=\a \oplus \h_m$.
\medskip

\textbf{(v)} We claim that for every $a \in \a$, the map $\rho(a)$ satisfies 
\begin{equation}\label{skew-symmetry}
\omega(\rho(a)(u),v)=-\omega(u,\rho(a)(v))\,\quad \text{ for all }u,v \in V.
\end{equation}
By \textbf{(i)} we have $[a,[u,v]]=\omega(u,v)[a,\hslash]=0$. On the other hand, the Jacobi identity of $[\,\cdot\,,\,\cdot\,]$ implies
$$
\aligned
\,0=[a,[u,v]]&=[[a,u],v]+[u,[a,v]]=[\rho(a)(u),v]+[u,\rho(a)(v)]\\
\,&=\omega(\rho(a)(u),v)\hslash+\omega(u,\rho(a)(v))\hslash,
\endaligned
$$
from it follows \eqref{skew-symmetry}. 
\smallskip

Since $\g=\a \oplus \h_m$, for each pair of elements $a,b \in \a$, there are unique elements $[a,b]_{\a} \in \a$ and $\Lambda(a,b) \in \h_m$ such that $[a,b]=[a,b]_{\a}+\Lambda(a,b)$. The Jacobi identity of the bracket of $\g$ implies that the pair $(\a,[\,\cdot\,,\,\cdot\,]_{\a})$ is a Lie algebra isomorphic to the quotient $\g/\h_m$. 
\smallskip

As $\h_m=V \oplus \F \hslash$, for each pair $a,b \in \a$, there are unique elements $\lambda(a,b) \in V$ and $\mu(a,b) \in \F$ such that $\Lambda(a,b)=\lambda(a,b)+\mu(a,b)\hslash$. Hence,
\begin{equation}\label{s1}
[a,b]=[a,b]_{\a}+\lambda(a,b)+\mu(a,b)\hslash=[a,b]_{\a}+\Lambda(a,b),
\end{equation}

We will prove that the map $\rho:\a \to \gl(V)$ appearing in \eqref{descNov2}, is a Lie algebra representation of $\a$. Let $a,b \in \a$ and $u \in V$. From \eqref{s1} we get
\begin{equation}\label{g9}
\begin{split}
\,[[a,b],u]&=[[a,b]_{\a},u]+[\lambda(a,b),u]\\
\,&=\rho([a,b]_{\a})(u)+\omega(L([a,b]_{\a}),u)\hslash+\omega(\lambda(a,b),u)\hslash.
\end{split}
\end{equation}
On the other hand, by the Jacobi identity we have
\begin{equation}\label{g10}
\begin{split}
\,[[a,b],u]&=[a,[b,u]]-[b,[a,u]]=[a,\rho(b)(u)]-[b,\rho(a)(u)]\\
\,&=[\rho(a),\rho(b)]_{\gl(V)}(u)\\
\,&+(\omega(\rho(a)(L(b)),u)-\omega(\rho(b)(L(a)),u))\hslash
\end{split}
\end{equation}
Comparing \eqref{g9} and \eqref{g10} we obtain
\begin{align}
\label{g11} & \rho([a,b]_{\a})=[\rho(a),b)]_{\gl(V)},\\
\label{g12} & L([a,b]_{\a})+\lambda(a,b)=\rho(a)(L(b))-\rho(b)(L(a)).
\end{align}
From \eqref{g11} we deduce that $\rho:\a \to \gl(V)$ is a representation. In fact, from \eqref{skew-symmetry} we have that $\Im(\rho)$ is contained in $\mathfrak{o}(\omega)$.
\smallskip

Consider the following bracket $[\,\cdot\,,\,\cdot\,]^{\prime}$ on $\g$:
\begin{equation}\label{productoX}
\begin{split}
\forall\, a,b \in \a,\quad & [a,b]^{\prime}=[a,b]_{\a}+\left(\mu(a,b)+\omega(L(a),L(b))\right)\hslash,\\
\forall\, a \in \a,\,v \in V, \quad  & [a,v]^{\prime}=\rho(a)(v),\\
\forall\, u,v \in V, \quad & [u,v]^{\prime}=\omega(u,v)\hslash.
\end{split}
\end{equation}
Observe that $[a,b]^{\prime}$ has no component in $V$. Let $a \in \a$ and $u \in V$. Define the map $\Phi:(\g,[\,\cdot\,,\,\cdot\,]^{\prime}) \to (\g,[\,\cdot\,,\,\cdot\,])$ by 
$$
\Phi(a)=a-L(a),\quad \Phi(u)=u \quad \text{ and }\quad \Phi(\hslash)=\hslash,
$$
where $a$  is in $\a$ and $u$ is in $V$. We affirm that $\Phi$ is an algebra isomorphism. Indeed, let $a,b \in \a$, then
\begin{equation}\label{g13}
\begin{split}
\Phi([a,b]^{\prime})&=\Phi([a,b]_{\a})+\mu(a,b)\hslash+\omega(L(a),L(b))\hslash,\\
\,&=[a,b]_{\a}-L([a,b]_{\a})+\mu(a,b)\hslash-\omega(L(a),L(b))\hslash.
\end{split}
\end{equation}
On the other hand,
\begin{equation}\label{g14}
\begin{split}
[\Phi(a),\Phi(b)]&=[a-L(a),b-L(b)]\\
\,&=[a,b]_{\a}+\lambda(a,b)+\mu(a,b)\hslash-\rho(a)(L(b))-\omega(L(a),L(b))\\
\,&+\rho(b)(L(a))+\omega(L(b),L(a))\hslash+\omega(L(a),L(b))\hslash\\
\,&=[a,b]_{\a}+\lambda(a,b)-\rho(a)(L(b))+\rho(a)(L(b))-\omega(L(a),L(b)))\hslash
\end{split}
\end{equation}
From \eqref{g12}, \eqref{g13} and \eqref{g14} we get $\Phi([a,b]^{\prime})=[\Phi(a),\Phi(b)]$. In addition, let $a \in \a$ and $v \in V$, then
$$
\Phi([a,v]^{\prime})=\Phi(\rho(a)(v))=\rho(a)(v).
$$
On the other hand,
$$
\aligned
\,[\Phi(a),\Phi(v)]&=[a-L(a),v]=[a,v]-[L(a),v]\\
\,&=\rho(a)(v)+\omega(L(a),v)\hslash-\omega(L(a),v)\hslash=\rho(a)(v).\\
\endaligned
$$
Hence $\Phi([a,v]^{\prime})=[\Phi(a),\Phi(v)]$. Since $\Phi\vert_{\h_m}=\operatorname{Id}_{\h_m}$ then $\Phi([u,v]^{\prime})=[\Phi(u),\Phi(v)]$ for all $u,v \in V$. Therefore, $\Phi$ is a Lie algebra isomorphism and by \eqref{productoX}, from now on we shall assume that the bracket $[\,\cdot\,,\,\cdot\,]$ of $\g$ can be written as
\begin{equation}\label{g15}
\begin{split}
\,\forall\, a,b \in \a, \quad \,& [a,b]=[a,b]_{\a}+\mu(a,b)\hslash,\\
\,\forall\, a \in \a,\, v \in V, \quad & [a,v]=\rho(x)(v),\\
\,\forall\, u,v \in V,\quad & [u,v]=\omega(u,v)\hslash.
\end{split}
\end{equation}
Where $(\a,[\,\cdot\,,\,\cdot\,]_{\a})$ is a Lie algebra, $\mu:\a \times \a \to \F$ is a skew-symmetric bilinear map and $\rho:\a \to \mathfrak{o}(\omega)$ is a Lie algebra representation.
\end{proof}

\subsection{Double extension}\label{double extension section}

For our purposes, we shall consider a particular case of a double extension of a quadratic Lie algebra. Let $(\S,[\,\cdot\,,\,\cdot\,]_{\S},B_{\S})$ be a quadratic Lie algebra and let $D:\S \to \S$ be a  derivation of $\S$ such that $B_{\S}(D(x),y)=-B_{\S}(x,D(y))$ for all $x,y \in \S$, i.e., $D$ is skew-symmetric with respect to $B_{\S}$. Let $\hslash$ be a symbol. In the vector space $\F D \oplus \S \oplus \F \hslash$, define the bilinear map $[\,\cdot\,,\,\cdot\,]$ by
$$
\aligned
\,\forall\,x \in \S,\quad & [D,x]=D(x),\\
\,\forall\,x,y \in \S,\quad & [x,y]=[x,y]_{\S}+B_{\S}(D(x),y)\hslash,\\
\,\forall\,x  \in \F D \oplus \S \oplus \F \hslash,\,\quad & [\hslash,x]=0.
\endaligned
$$
Then $[\,\cdot\,,\,\cdot\,]$ is a Lie bracket on $\F D \oplus \S \oplus \F \hslash$. In addition, we define the bilinear form $B$ on $\F D \oplus \S \oplus \F \hslash$ by
$$
\aligned
& B(x,y)=B_{\S}(x,y)\quad \text{ for all }x,y \in \S,\qquad B(D,\hslash)=1,\\
& B(x,D)=B(x,\hslash)=0\quad \text{ for all }x \in \S,\qquad B(D,D)=B(\hslash,\hslash)=0.
\endaligned
$$ 
Then, $B$ is an invariant metric on $\F D \oplus \S \oplus \F \hslash$. The quadratic Lie algebra on $\F D \oplus \S \oplus \F \hslash$ is known as the \textbf{double extension of $\S$ by $D$} and we denote it by $\S(D)$.

\begin{Lemma} \label{lema-aux}{\sl
Let $(\g,[\,\cdot\,,\,\cdot\,],B)$ be a quadratic Lie algebra containing the Heisenberg Lie algebra $\h_m$ as an ideal. Then, there exists a complementary vector subspace $\a$ to $\h_m$ in $\g$ satisfying \textbf{(iv)} and \textbf{(v)} of Lemma \ref{lema-aux-0}. In addition, there is a non-degenerate subspace $\S$ of $\g$ such that
\smallskip

\textbf{(i)} $\a=\mathcal{S} \oplus \F d$, where $\S^{\perp}=\F d \oplus \h_m$ and $B(d,\hslash)=1$.
\smallskip

\textbf{(ii)} $[\a,\a]_{\a} \subset \S$.
\smallskip

\textbf{(iii)} The subspace $\S$ has a quadratic Lie algebra structure.
\smallskip

\textbf{(iv)} $[\S,\h_m]=\{0\}$.
\smallskip

\textbf{(v)} There is a quadratic Lie algebra on $\F d \oplus \S \oplus \F \hslash$ isomorphic to a double extension of $\S$ by a derivation $D$ of $\S$.
\smallskip

\textbf{(vi)} There is a linear map $\sigma:\F d \oplus \S \oplus \F \hslash \to \mathfrak{o}(\omega)$ such that $\sigma(d)$ is invertible and $\sigma(x)=0$ for all $x \in \S \oplus \F \hslash$.}
\end{Lemma}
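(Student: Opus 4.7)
My plan is to start from the normalized setup of Lemma \ref{lema-aux-0}: a complement $\a\subset V^\perp$ with the bracket decomposition \eqref{descripcion-corchete} and associated maps $\rho:\a\to\mathfrak{o}(\omega)$ and $\mu$. Let $\eta:\a\to\F$ be the functional $\eta(a)=B(a,\hslash)$; from the proof of Lemma \ref{lema-aux-0}(iii) there is an element $d\in\a$ with $\eta(d)=1$ and $\rho(d)$ invertible, giving the splitting $\a=\Ker(\eta)\oplus\F d$. I would then set $\S:=\Ker(\eta)$ and, after a careful adjustment of $d$ within its coset, verify (i)--(vi) in turn.

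The first key observation is that $\rho$ annihilates $\Ker(\eta)$. Invariance of $B$ gives $B(\rho(a)(u),v)=B([a,u],v)=B(a,[u,v])=\omega(u,v)\eta(a)$, so for $a\in\Ker(\eta)$ the left-hand side vanishes for every $v\in V$, and non-degeneracy of $B|_V$ forces $\rho(a)=0$. This immediately yields (iv), $[\S,\h_m]=\{0\}$, and the prescription $\sigma(cd+s+\gamma\hslash):=c\,\rho(d)$ provides the map required by (vi). A second observation is that $\h_m^\perp=\Ker(\eta)\oplus\F\hslash$ and that $\F\hslash=\h_m\cap\h_m^\perp$ is the full radical of $B|_{\h_m^\perp}$; writing $B|_{\h_m^\perp}$ in block form relative to this decomposition shows $B|_\S$ is non-degenerate, handling the non-degeneracy claim of (iii). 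The inclusion (ii), $[\a,\a]_\a\subset\S$, is automatic from invariance: $B([a,b]_\a,\hslash)=B([a,b],\hslash)-\mu(a,b)B(\hslash,\hslash)=B(a,[b,\hslash])=0$.

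To obtain $\S^\perp=\F d\oplus\h_m$ as in (i), I would replace $d$ by $d-k$ for a suitable $k\in\S$ so that $B(d,\S)=0$; such a $k$ exists by non-degeneracy of $B|_\S$, and the replacement preserves both $\eta(d)=1$ and the invertibility of $\rho(d)$ since $\rho(k)=0$. The inclusions $\S\subset V^\perp$ and $\S\subset(\F\hslash)^\perp$, together with $\S\perp d$ and a dimension count, force $\S^\perp=\F d\oplus\h_m$. For (iii), the induced bracket $[s,t]_\S:=[s,t]_\a$ takes values in $\S$ by (ii), and a short unwinding of the $\mu$-correction shows $B|_\S$ is invariant under $[\,\cdot\,,\,\cdot\,]_\S$.

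The main content of (v), and the step I expect to be the main obstacle, is a clean identification with a double extension. The key computation is $\mu(d,s)=0$ for every $s\in\S$. This follows from $[d,d]=0$: invariance gives $0=B([d,d],s)=B(d,[d,s])=B([d,s],d)$, while expanding $[d,s]=[d,s]_\a+\mu(d,s)\hslash$ yields $B([d,s],d)=B([d,s]_\a,d)+\mu(d,s)B(\hslash,d)=0+\mu(d,s)$ since $[d,s]_\a\in\S$ is orthogonal to $d$ and $B(\hslash,d)=1$. Once $\mu(d,s)=0$, I would define $D:\S\to\S$ by $D(s):=[d,s]$; Jacobi in $\g$ makes $D$ a derivation of $(\S,[\,\cdot\,,\,\cdot\,]_\S)$, invariance makes it $B_\S$-skew, and the identity $\mu(s,t)=B_\S(D(s),t)$ follows from $B([d,s],t)=B(d,[s,t])$. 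The map $d\mapsto D$, $s\mapsto s$, $\hslash\mapsto\hslash$ is then a Lie and metric isomorphism of $\F d\oplus\S\oplus\F\hslash$ with the double extension $\S(D)$, with no coboundary twist needed. The subtle point is recognizing that the orthogonality $B(d,\S)=0$ arranged for (i) is exactly what kills $\mu(d,\S)$ and lets the identification of (v) be the obvious one.
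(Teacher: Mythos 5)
Your proposal is correct and lands at the same structure as the paper's proof, but the bookkeeping differs at two points in ways worth recording. To arrange $B(d,\S)=0$, the paper keeps $d$ fixed and replaces each $a\in\Ker(\eta)$ by $a-B(a,d)\hslash$, so its $\S$ is a tilted copy of $\Ker(\eta)$; this forces it to introduce an auxiliary bracket $[\,\cdot\,,\,\cdot\,]^{\prime\prime}$ and an isomorphism $\Psi$ to check that the normalized bracket decomposition survives the change of complement. You instead keep $\S=\Ker(\eta)$ and move $d$ to $d-k$ with $k\in\S$ supplied by non-degeneracy of $B|_{\S}$; since this does not change the subspace $\a$ at all, no re-verification of the bracket decomposition is needed, which is a genuine simplification (and your block-form argument for the non-degeneracy of $B|_{\Ker(\eta)}$ via $\h_m^{\perp}=\Ker(\eta)\oplus\F\hslash$ is sound). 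Your proof of \textbf{(iv)} is also different: you show directly that $\rho$ annihilates $\Ker(\eta)$ from $B(\rho(a)(u),v)=\omega(u,v)B(a,\hslash)$ and non-degeneracy of $B|_{V}$, whereas the paper argues that $\h_m=[\g,\h_m]$ forces $\h_m^{\perp}=C_{\g}(\h_m)$ and then uses $\S\subset\h_m^{\perp}$. Both are valid; yours is more elementary and delivers \textbf{(vi)} immediately. You also spell out $\mu(d,\S)=0$, a point the paper's identification with the double extension uses but leaves implicit.

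One small patch is needed in \textbf{(v)}: in the double extension $\S(D)$ one has $B(D,D)=0$ by definition, so for your map $d\mapsto D$, $s\mapsto s$, $\hslash\mapsto\hslash$ to be an isometry you must also normalize $B(d,d)=0$. Your replacement $d\mapsto d-k$ does not control $B(d,d)$, but composing with $d\mapsto d-\tfrac{1}{2}B(d,d)\hslash$ (as the paper does) fixes this without disturbing $B(d,\hslash)=1$, $B(d,\S)=0$, the subspace $\a$, or $\rho(d)$, since $\hslash$ is central and isotropic.
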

\begin{proof}

\textbf{(i)-(ii)} From Lemma \ref{lema-aux-0}.(iv)-(v), we know that there is a complementary vector subspace $\a$ to $\h_m$ in $\g$ satisfying \textbf{(iv)} and \textbf{(v)} of Lemma \ref{lema-aux-0}, i.e., $\a \subset V^{\perp}$ and the bracket $[\,\cdot\,,\,\cdot\,]$ of $\g$ has the decomposition given in \eqref{g15}.
\smallskip

Let us consider the linear map $\eta:\a \to \F$, $a \mapsto B(a,\hslash)$, that we used in the proof of Lemma \ref{lema-aux-0}.(iii). As $\h_m \subset (\F \hslash)^{\perp}$ (see Lemma \ref{lema-aux-0}.\textbf{(ii)}), then $\eta \neq 0$ and $[\a,\a]_{\a} \subset \Ker(\eta)$.
\smallskip

Let $d \in \a$ be such that $B(d,\hslash)=1$ and $\a=\Ker(\eta) \oplus \F d$. Because $\hslash$ belongs to $Z(\g)$ and $[\g,\g] \subset \Ker(\eta) \oplus V \oplus \F \hslash$, the linear map 
$$
x \mapsto x \quad \text{ for all } x \in \Ker(\eta) \oplus V \oplus \F \hslash\qquad \text{ and }\quad d \mapsto d-\frac{1}{2}B(d,d)\hslash,
$$
is an isomorphism of $\g$. Therefore, we assume that $B(d,d)=0$. As $\a=\Ker(\eta) \oplus \F d$, we define the subspace
$$
\b=\{a-B(a,d)\hslash\mid a \in \a\}=\{a-B(a,d)\hslash\mid a \in \Ker(\eta)\} \oplus \F d.
$$ 
Then $\b=\mathcal{S} \oplus \F d$ where 
$$
\aligned
& \bullet\, \S=\{a-B(a,d)\hslash\mid a \in \Ker(\eta)\},\\
& \bullet\, \g=\S \oplus \F d \oplus \h_m\,\,\text{ and }\\
& \bullet\, B(\S,\hslash)=B(\S,d)=\{0\}=B(\S,V).
\endaligned
$$
Observe that $\b$ is a complementary vector subspace to $\h_m$ in $\g$, i.e., $\g=\b \oplus \h_m$. In addition, observe that $\S \subset (\F d \oplus \h_m)^{\perp}$. 
\smallskip

We will prove that $\S$ is a non-degenerate subspace of $\g$. Let $x$ be in $\S \cap \S^{\perp}$. Then $B(x,\S)=\{0\}$ and $B(x,\F d \oplus \h_m)=\{0\}$, hence $B(x,\g)=\{0\}$, which implies that $x=0$. This proves that $\S$ is a non-degenerate subspace of $\g$ and $\S^{\perp}=\F d \oplus \h_m$.
\smallskip

The skew-symmetric bilinear map $[\,\cdot\,,\,\cdot\,]_{\b}$ defined on $\b=\S \oplus \F d$, by
$$
[a-B(a,d)\hslash,b-B(b,d)\hslash]_{\b}=[a,b]_{\a}-B([a,b]_{\a},d)\hslash\quad \text{ for all }a,b \in \a,
$$
makes $\b$ into a Lie algebra. In addition, $[\b,\b]_{\b}$ is contained in $\S$ because $[\a,\a]_{\a} \subset (\F \hslash)^{\perp}$, as $[\a,\a]-[\a,\a]_{\a} \subset \F \hslash$. 
\smallskip

We define the following bracket $[\,\cdot\,,\,\cdot\,]^{\prime \prime}$ on $\g=\S \oplus \F d \oplus \h_m$ by
\begin{equation*}\label{g15-A}
\begin{split}
\,\forall\, a,b \in \a, \quad \,& [a-B(a,d)\hslash,b-B(b,d)\hslash]^{\prime \prime}\\
\,&=[a-B(a,d)\hslash,b-B(b,d)\hslash]_{\b}+\mu(a,b)\hslash,\\
\,\forall\, a \in \a,u \in V,\quad & [a-B(a,d)\hslash,u]^{\prime \prime}=\rho(a)(u),\\
\,\forall\,u,v \in V,\,\quad & [u,v]^{\prime \prime}=\omega(u,v)\hslash.
\end{split}
\end{equation*}
Let $\Psi:\g \to (\S \oplus \F d) \oplus \h_m$ be the linear map defined by
$$
\aligned
\,\forall\,a \in \a,\quad & \Psi(a)=a-B(a,d)\hslash,\\
\,\forall\,x \in \h_m,\quad & \Psi(x)=x.
\endaligned
$$
It is straightforward to verify that $\Psi$ is a Lie algebra isomorphism between $(\g,[\,\cdot\,,\,\cdot\,])$ and $(\g,[\,\cdot\,,\,\cdot\,]^{\prime \prime})$. This isomorphism allows us make the identification $\a \leftrightarrow \b=\S \oplus \F d$. Therefore, from now on we assume that there exists a complementary vector subspace $\a$ to $\h_m$ in $\g$ such that $\a = \S \oplus \F d$, where $\S^{\perp}=\F d \oplus \h_m$, $[\a,\a]_{\a} \subset \S$ and $B(d,\hslash)=1$.
\smallskip

\textbf{(iii)} We shall prove that $B$ restricted to $\S \times \S$ is invariant. Indeed, let $x,z$ be in $\S$ and $y \in \a$. As $B(x,\hslash)=B(z,\hslash)=0$, then we have
\begin{equation}\label{skew}
B([x,y]_{\a},z)=B([x,y],z)=B(x,[y,z])=B(x,[y,z]_{\a}).
\end{equation}
Whence, $B\vert_{\S \times \S}$ is invariant under $[\,\cdot\,,\,\cdot\,]_{\a}$. By \textbf{(ii)} we know that $\S$ is a non-degenerate subspace of $\g$. Therefore, $(\S,[\,\cdot\,,\,\cdot\,]_{\S},B_{\S})$ is a quadratic Lie algebra, where $[\,\cdot\,,\,\cdot\,]_{\S}={[\,\cdot\,,\,\cdot\,]_{\a}}\vert_{\S \times \S}$ and $B_{\S}=B\vert_{\S \times \S}$.
\smallskip

\textbf{(iv)} We will prove that $[S,V]=\{0\}$. We know that there is a Lie algebra representation $\rho:\a \to \mathfrak{o}(\omega)$ such that $[a,u]=\rho(a)(u)$ for all $a \in \a=\S \oplus \F d$ and $u \in V$. In addition $\rho(d)$ is invertible (see \eqref{descNov4}).
\smallskip

Because $\rho(d)$ is invertible, then $V \subset [\a,\h_m]=\rho(\a)(\h_m)$. As $\omega$ is non-degenerate, there are $u,v \in V$ such that $[u,v]=\hslash$. Then, $V \oplus \F \hslash=\h_m=[\g,\h_m]$, hence 
$$
\h_m^{\perp}=C_{\g}(\h_m)=\{x \in \g\mid [x,y]=0\text{ for all }y \in \h_m\}.
$$
Since $\S^{\perp}=\F d \oplus \h_m$, then $\h_m \subset \S^{\perp}$, hence $\S \subset \h_m^{\perp}$. Therefore, $[\S,V]=\{0\}$. %We claim that $\S=\Ker(\rho)$. Indeed, let $a \in \S$ and $u,v \in V$. Since $B$ is invariant and $\S \subset (\F \hslash)^{\perp}$, we get
%$$
%B(\rho(a)(u),v)=B([a,u],v)=B(a,[u,v])=\omega(u,v)B(a,\hslash)=0.
%$$
%Because $V$ is a non-degenerate subspace of $\g$, hence $\rho(a)(u)=0$. As $u$ is arbitrary, it follows that $a$ belongs to $\Ker(\rho)$ and $\S \subset \Ker(\rho)$. Since $\a=\S \oplus \F d$ and $\rho(d)$ is invertible, we deduce that $\S=\Ker(\rho)$.
\smallskip

\textbf{(v)} Define the linear map $D:\S \to \S$ by $D(a)=[d,a]_{\a}$ for all $a \in \S$. Since $[\a,\a]_{\a} \subset \S$, the map $D$ is well defined. The Jacobi identity shows that $D$ is a derivation of $\S$ and from \eqref{skew} we deduce that it is skew-symmetric with respect to $B_{\S}$. Because $B(d,\hslash)=1$ and $B(d,\S)=\{0\}$, we have
$$
\aligned
0&=B(d,[a,b]_{\a})=B(d,[a,b])-\mu(a,b)=B([d,a],b)-\mu(a,b)\\
&=B_{\S}(D(a),b)-\mu(a,b),
\endaligned
$$
hence $\mu(a,b)=B_{\S}(D(a),b)$ for all $a,b \in \S$. In summary, as $\F d \oplus \F \hslash \subset \S^{\perp}$, on the vector space $\F d \oplus \S \oplus \F \hslash$ we have the following
\begin{equation}\label{ecuaciones}
\begin{split}
& [d,a]_{\a}=D(a) \quad \text{ for all }a \in \S,\\
& [a,b]=[a,b]_{\S}+B_{\S}(D(a),b)\hslash \quad \text{ for all }a,b \in \S,\\
& B(a,b)=B_{\S}(a,b),\quad B(a,d)=B(a,\hslash)=0,\quad B(d,\hslash)=1,\\
& B(d,d)=B(\hslash,\hslash)=0.
\end{split}
\end{equation}
Then $\F d \oplus \S \oplus \F \hslash$ is a double extension of $\S$ by $D$.
\smallskip

\textbf{(vi)} We define the map $\sigma:\F d \oplus \S \oplus \F \hslash \to \mathfrak{o}(\omega)$ by $\sigma(x)=\rho(x)$ for all $x \in \S \oplus \F d$ and $\sigma(\hslash)=0$. As $[\S,V]=\{0\}$, then $\sigma(x)=0$ for all $x \in \S \oplus \F \hslash$ and $\sigma(d)=\rho(d)$ is an invertible map.
\end{proof}

\begin{Lemma}\label{lema X}{\sl
Let $(\g,[\,\cdot\,,\,\cdot\,],B)$ be a quadratic Lie algebra containing the Heisenberg Lie algebra $\h_m$ as an ideal. Let $\a=\S \oplus \F d$ be the subspace of $\g$ satisfying the conditions of Lemma \ref{lema-aux} and let $D:\S \to \S$ be the derivation of $\S$ defined by $D(a)=[d,a]_{\a}$ (see \eqref{ecuaciones}). Then,
\smallskip

\textbf{(i)} $Z(\g)=(\Ker(D) \cap Z(\S)) \oplus \F \hslash$.
\smallskip

\textbf{(ii)} $[\g,\g]=(\Im (D) + [\S,\S]_{\S}) \oplus \h_m$.
\smallskip

\textbf{(iii)} The subspace $\S^{\perp}=\F d \oplus \h_m$ has a quadratic Lie algebra structure isomorphic to the Heisenberg Lie algebra extended by a derivation.
%\smallskip

%\textbf{(iv)} If $D=0$, then the subspace $\a$ is a subalgebra of $\g$ and it admits an invariant metric.
}
\end{Lemma}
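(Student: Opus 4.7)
My plan is to prove the three parts in order, exploiting the normal form for the bracket on $\g = \S \oplus \F d \oplus V \oplus \F\hslash$ provided by Lemma \ref{lema-aux}, together with the key facts that $[\S,V]=\{0\}$, $\rho(d)|_V$ is invertible, $\rho|_\S = 0$, and $[a,b] = [a,b]_\S + B_\S(D(a),b)\hslash$ for $a,b \in \S$.

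For \textbf{(i)}, I would take an arbitrary $x = s + \alpha d + v + \beta\hslash \in Z(\g)$ and compute $[x,y]$ for $y$ ranging over a spanning set. Evaluating against an arbitrary $u \in V$ gives $\alpha\rho(d)(u) + \omega(v,u)\hslash = 0$; non-degeneracy of $\omega$ and invertibility of $\rho(d)|_V$ force $\alpha = 0$ and $v = 0$. Then $[x,d] = -D(s)$ forces $s \in \Ker(D)$, and $[x,s'] = [s,s']_\S + B_\S(D(s),s')\hslash = [s,s']_\S$ being zero for all $s' \in \S$ forces $s \in Z(\S)$. The reverse inclusion is immediate since $\hslash \in Z(\g)$ and any $s \in \Ker(D) \cap Z(\S)$ kills every generator via the same formulas.

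For \textbf{(ii)}, I would simply enumerate the brackets: $[\S,\S] \subseteq [\S,\S]_\S + \F\hslash$, $[d,\S] = \Im(D)$, $[d,V] = \rho(d)(V) = V$ (since $\rho(d)|_V$ is invertible), $[V,V] = \F\hslash$, and $[\S,V] = [\S,\hslash] = [d,\hslash] = [V,\hslash] = 0$. Summing and using that $(\Im(D) + [\S,\S]_\S) \subseteq \S$ is disjoint from $\h_m = V \oplus \F\hslash$ gives the direct sum decomposition.

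For \textbf{(iii)}, I would verify that $\S^\perp = \F d \oplus V \oplus \F\hslash$ is closed under $[\,\cdot\,,\,\cdot\,]$ (immediate from the bracket formulas, since $[\S,V] = 0$ and $\rho(d)(V) \subseteq V$) and that $B|_{\S^\perp}$ is non-degenerate (automatic because $\S$ is non-degenerate in $\g$). Setting $\phi := \rho(d)|_V$, I would observe that $\phi \in \mathfrak{o}(\omega)$, $\phi(\hslash) = 0$, $\phi(V) \subseteq V$ and $\phi$ is invertible on $V$, so $\phi$ satisfies the hypotheses of the construction of $\h_m(\phi)$ from the introduction. Then I would match the structure constants: the bracket on $\S^\perp$ reads $[d,u] = \phi(u)$, $[u,v] = \omega(u,v)\hslash$, $[\hslash,\S^\perp] = 0$, identical to the bracket defining $\h_m(\phi)$. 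Finally I would match the metric: $B(d,\hslash) = 1$ and $B(d,d) = B(\hslash,\hslash) = 0$ hold by Lemma \ref{lema-aux}, $B(d,V) = 0$ because $\a \subseteq V^\perp$, $B(V,\hslash) = 0$ by Lemma \ref{lema-aux-0}(ii), and the crucial identity $B(u,v) = \omega(\rho(d)^{-1}(u),v) = \omega(\phi^{-1}(u),v)$ is exactly \eqref{descNov4}. This matches \eqref{doble extension para Heisenberg} term for term, so the identity map $\S^\perp \to \h_m(\phi)$ is an isomorphism of quadratic Lie algebras.

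I expect no substantial obstacle; the work has already been done in Lemma \ref{lema-aux-0} and Lemma \ref{lema-aux}, and all three parts reduce to careful bookkeeping with the normalized bracket \eqref{ecuaciones}. The only small point requiring attention is verifying the direct-sum claim in \textbf{(ii)}, where one must note that $\Im(D) + [\S,\S]_\S$ lies entirely in $\S$ and hence is linearly disjoint from $\h_m$.
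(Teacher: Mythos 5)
Your proposal is correct and follows essentially the same route as the paper: parts (i) and (ii) are read off from the normalized bracket of Lemma \ref{lema-aux} (the paper cites the equivalent decomposition \eqref{corchete-extension}), and part (iii) is obtained by setting $\phi=\rho(d)\vert_V$ with $\phi(\hslash)=0$ and matching against \eqref{doble extension para Heisenberg} via \eqref{descNov4}. You simply spell out the bookkeeping that the paper leaves implicit.
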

\begin{proof}

\textbf{(i)-(ii)} This follows by the decomposition of the bracket $[\,\cdot\,,\,\cdot\,]$ given in \eqref{corchete-extension}.
\smallskip

\textbf{(iii)} By Lemma \ref{lema-aux}.(i), we know that $\S^{\perp}=\F d \oplus V \oplus \F \hslash$. We define $\phi:\h_m \to\h_m$ by $\phi(u)=\rho(d)(u)$ and $\phi(\hslash)=0$ for all $u \in V$. Then $\phi$ is a derivation of $\h_m$ and $\S^{\perp}$ is isomorphic to the Heisenberg Lie algebra $\h_m$ extended by the derivation $\phi$.% via the map $d \mapsto \phi$ and $x \mapsto x$ for all $x \in \h_m$.
\smallskip

%\textbf{(iv)} If $D=0$ then $d$ belongs to the center of $(\a,[\,\cdot\,,\,\cdot\,]_{\a})$. Since $\S$ is an ideal of $\a$ that admits an invariant metric, we may consider the bilinear form $B_{\a}$ on $\a$ defined by
%$$
%B_{\a}(a+\lambda d,b+\mu d)=B_{\S}(a,b)+\lambda \mu\,\,\,\text{ for all }a,b \in \S\text{ and }\lambda,\mu \in \F.
%$$
%It is straightforward to verify that $B_{\a}$ is an invariant metric on $\a$. In addition, by \eqref{ecuaciones} it follows that $[a,b]=[a,b]_{\a}$ lies in $\a$ for all $a,b \in \a$, which proves that $[\a,\a] \subset \a$, i.e., $\a$ is a subalgebra of $\g$.

\end{proof}

In the following result we state a procedure to construct a quadratic Lie algebra that contains the Heisenberg Lie algebra as an ideal. We also prove that any quadratic Lie algebra containing the Heisenberg Lie algebra as an ideal can be constructed in this way.

\begin{Theorem}\label{teorema-doble-ext}{\sl
Let $(\S,[\,\cdot\,,\,\cdot\,]_{\S},B_{\S})$ be a quadratic Lie algebra and let $D$ be a derivation in $\mathfrak{o}(B_{\S})$. Let $\hslash$ be a symbol and in the vector space $\F D \oplus \S \oplus \F \hslash$ consider the double extension $\S(D)$ of $\S$ by $D$. Let $V$ be a $2m$-dimensional vector space equipped with a non-degenerate skew-symmetric bilinear form $\omega:V \times V \to \F$. If there exists a linear map $\sigma:\S(D) \to \mathfrak{o}(\omega)$ such that $\sigma(D)$ is invertible and $\sigma(x)=0$ for all $x \in \S \oplus \F \hslash$, then in the vector space $\g=\S \oplus \F D \oplus V \oplus \F \hslash$ there is a quadratic Lie algebra structure containing the Heisenberg Lie algebra as an ideal. Reciprocally, every quadratic Lie algebra containing the Heisenberg Lie algebra as an ideal can be constructed in this way.} 
\end{Theorem}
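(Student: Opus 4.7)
The plan is to prove the two directions separately. For the forward (construction) direction, I would define a bracket and a metric on $\g = \S \oplus \F D \oplus V \oplus \F \hslash$ by gluing the double extension $\S(D)$ and the Heisenberg Lie algebra $V \oplus \F \hslash$ along the map $\sigma$. Explicitly, the bracket extends the double extension bracket on $\S \oplus \F D \oplus \F \hslash$ and the Heisenberg bracket on $V \oplus \F \hslash$, with cross terms $[x,u] = \sigma(x)(u)$ for $x \in \S(D)$ and $u \in V$; by hypothesis on $\sigma$ this reduces to $[D,u] = \sigma(D)(u)$ and $[\S \oplus \F\hslash, V] = 0$. The invariant metric $B$ restricts to $B_\S$ on $\S$, pairs $D$ with $\hslash$ hyperbolically, is given by $B(u,v) = \omega(\sigma(D)^{-1}(u), v)$ on $V$, and is zero between distinct summands otherwise. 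The Heisenberg ideal $\h_m = V \oplus \F\hslash$ is visible directly in the construction since $[\g,V] \subset V$ and $[\g,\hslash] = 0$.

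The verification proceeds by cases on the number of $V$-entries. With zero $V$-entries the Jacobi identity is that of $\S(D)$, which holds because $(\S,[\,\cdot\,,\,\cdot\,]_\S,B_\S)$ is quadratic and $D$ is a skew derivation. With two $V$-entries $u,v$ and one $x \in \S(D)$, one side vanishes because $\hslash$ is central and the other because $\sigma(x) \in \mathfrak{o}(\omega)$. With one $V$-entry and two entries from $\S(D)$, the Jacobi identity reduces to $\sigma$ being a representation, which holds tautologically since $\sigma$ vanishes on $\S \oplus \F\hslash$ and $[\S(D),\S(D)] \subset \S \oplus \F\hslash$; the only nontrivial check, $\sigma([D,a]) = [\sigma(D),\sigma(a)]$ for $a \in \S$, reads $\sigma(D(a)) = 0 = [\sigma(D),0]$, which is true because $D(a) \in \S$. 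Three $V$-entries is the Heisenberg Jacobi identity. Non-degeneracy of $B$ is immediate from the hyperbolic pairing of $D,\hslash$, non-degeneracy of $B_\S$, and invertibility of $\sigma(D)$. Invariance of $B$ is a routine case check; the representative case is $B([D,u],v) = \omega(u,v) = B(D,[u,v])$, and the remaining cases follow from invariance of $B_\S$, the double extension formula, and $\sigma(D) \in \mathfrak{o}(\omega)$. For the converse, I would simply invoke Lemma~\ref{lema-aux}: parts \textbf{(iii)}--\textbf{(vi)} produce a decomposition $\g = \S \oplus \F d \oplus V \oplus \F\hslash$ in which $\S$ is quadratic, $\F d \oplus \S \oplus \F \hslash$ is isomorphic to the double extension $\S(D)$ via $D(a) = [d,a]_\a$, and $\sigma: \S(D) \to \mathfrak{o}(\omega)$ has $\sigma(d)$ invertible and vanishes on $\S \oplus \F\hslash$. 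Identifying $d$ with the formal symbol $D$, the bracket and metric of $\g$ coincide with those produced by the construction.

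The only place where a nontrivial algebraic condition could have obstructed the construction is the Jacobi identity involving one $V$-entry, which would normally demand that $\sigma$ be a representation of $\S(D)$. Here this is free: because $\sigma$ is supported on the line $\F D$ while the commutator subalgebra $[\S(D),\S(D)]$ lies in $\S \oplus \F\hslash$, no compatibility beyond the stated hypothesis needs to be imposed. Consequently the main difficulty is organizational — keeping the case analysis in the Jacobi and invariance checks orderly — rather than conceptual, and the theorem follows immediately from the construction together with Lemma~\ref{lema-aux}.
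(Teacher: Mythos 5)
Your proposal is correct and follows essentially the same route as the paper: the same bracket \eqref{corchete-extension} and block metric \eqref{metrica-extension} for the construction, with the converse obtained by citing Lemma \ref{lema-aux} (together with \eqref{descNov4} for the metric on $V$); your verification of the Jacobi identity and invariance is in fact more explicit than the paper's. (Only a trivial slip: $[\g,V]\subset V\oplus\F\hslash$ rather than $V$, which does not affect the conclusion that $V\oplus\F\hslash$ is a Heisenberg ideal.)
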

\begin{proof}
On the vector space $\g=\S \oplus \F D \oplus V \oplus \F \hslash$, we define the bracket $[\,\cdot\,,\,\cdot\,]$ by
\begin{equation}\label{corchete-extension}
\begin{split}
\,\forall\, a,b \in \S, \quad \,& [a,b]=[a,b]_{\S}+B_{\S}(D(a),b)\hslash,\\
\,\forall\, u \in V,\, \quad & [D,u]=\sigma(d)(u),\\
\,\forall\, u,v \in V,\quad & [u,v]=\omega(u,v)\hslash.
\end{split}
\end{equation}
Observe that $V \oplus \F \hslash$ is an ideal of $\g$ and that it is the Heisenberg Lie algebra. Consider the bilinear form $B_V$ on $V$ defined by $B_V(u,v)=\omega(\sigma(D)^{-1}(u),v)$ for all $u,v \in V$. Observe that $B_V$ is symmetric and non-degenerate. Now define the bilinear form $B$ on $\g=\S(D) \oplus V$ by 
\begin{equation}\label{metrica-extension}
B(x+u,y+v)=B_{\S(D)}(x,y)+B_V(u,v)
\end{equation}
for all $x,y \in \S(D)$ and $u,v \in V$. Then, $B$ is an invariant metric on $(\g,[\,\cdot\,,\,\cdot\,])$.
\smallskip

The proof that any quadratic Lie algebra containing the Heisenberg Lie algebra as an ideal, has the bracket as in \eqref{corchete-extension} and the invariant metric as in \eqref{metrica-extension}, is obtained from Lemma \ref{lema-aux-0}, the expression \eqref{descNov4} and Lemma \ref{lema-aux}.

\end{proof}

\begin{Theorem}\label{teorema f}{\sl
Let $\g$ be an indecomposable quadratic Lie algebra. Then $\g$ is the Heisenberg Lie algebra $\h_m$ extended by a derivation if and only if $[\g,\g]=\h_m$.}
\end{Theorem}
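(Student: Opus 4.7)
The plan is to handle the two implications separately and invoke the structure theory for quadratic Lie algebras containing $\h_m$ as an ideal developed in Lemmas \ref{lema-aux} and \ref{lema X}.

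For the direction ($\Rightarrow$), I would assume $\g=\h_m(\phi)=\F\phi\oplus V\oplus\F\hslash$ as in the introduction. Reading off the defining bracket, $[\phi,V]=\phi(V)=V$ since $\phi|_V$ is invertible, $[V,V]=\F\hslash$, and $[\phi,\hslash]=0$, so $[\g,\g]=V\oplus\F\hslash=\h_m$. This is a one-line check.

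For ($\Leftarrow$), suppose $\g$ is indecomposable with $[\g,\g]=\h_m$. Then $\h_m$ is an ideal of $\g$, so by Lemma \ref{lema-aux} one can write $\g=\S\oplus\F d\oplus V\oplus\F\hslash$ with bracket \eqref{corchete-extension}, where $(\S,[\,\cdot\,,\,\cdot\,]_\S,B_\S)$ is a non-degenerate quadratic Lie subalgebra equipped with the skew-symmetric derivation $D$. By Lemma \ref{lema X}(ii),
$$
\h_m=[\g,\g]=(\Im(D)+[\S,\S]_\S)\oplus\h_m,
$$
which forces $D=0$ and $[\S,\S]_\S=0$. Feeding these vanishings back into \eqref{corchete-extension}, and using $\sigma(\S)=0$ from Lemma \ref{lema-aux}(vi), every bracket involving an element of $\S$ becomes zero: $[\S,\S]=0$, $[\S,d]=-D(\S)=0$, $[\S,V]=\sigma(\S)(V)=0$, and $[\S,\hslash]=0$. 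Hence $\S\subset Z(\g)$.

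To conclude, $\S$ is central and non-degenerate in $\g$, so $\g=\S\oplus\S^\perp$ as vector spaces with $\S^\perp=\F d\oplus\h_m$; invariance of $B$ together with centrality of $\S$ makes $\S^\perp$ an ideal (for $x\in\S^\perp$, $y\in\g$, $s\in\S$ one has $B([x,y],s)=B(x,[y,s])=0$), so this is an orthogonal direct sum of ideals of the quadratic Lie algebra $\g$. Since $\g$ is indecomposable and $\S^\perp$ contains the nontrivial $\h_m$, we must have $\S=0$. Therefore $\g=\F d\oplus V\oplus\F\hslash$, which by Lemma \ref{lema X}(iii) is isomorphic to $\h_m(\phi)$ with $\phi=\sigma(d)$. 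The only substantive step is the extraction $\S=0$ from indecomposability; the hypothesis $[\g,\g]=\h_m$ is precisely what collapses the general structure theorem to this case, and no nontrivial calculation is needed beyond invoking the preceding lemmas.
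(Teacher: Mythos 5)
Your proposal is correct and follows essentially the same route as the paper: both directions rely on the one-line computation $[\h_m(\phi),\h_m(\phi)]=\h_m$ and, conversely, on Lemma \ref{lema-aux} together with Lemma \ref{lema X}(ii) to force $D=0$ and $[\S,\S]_{\S}=\{0\}$, after which indecomposability kills the non-degenerate ideal $\S$. You merely spell out in more detail why $\S$ is central and why $\g=\S\oplus\S^{\perp}$ is an orthogonal sum of ideals, which the paper leaves implicit.
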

\begin{proof}
Suppose that $\g$ is the Heisenberg Lie algebra extended by a derivation. In the introduction we showed that $[\g,\g]=\h_m$. 
\smallskip

Now suppose that $\g$ is an indecomposable quadratic Lie algebra such that $[\g,\g]=\h_m$. By Lemma \ref{lema-aux} we can write $\g=\S \oplus \F d \oplus \h_m$, where $[\g,\g]=(\Im (D) + [\S,\S]_{\S}) \oplus \h_m$ (see Lemma \ref{lema X}.(ii)). Hence, $\Im(D)=[\S,\S]_{\S}=\{0\}$. This implies $D=0$ and that $\S$ is a non-degenerate ideal of $\g$ (see \eqref{corchete-extension}). As $\g$ is indecomposable, $\S=\{0\}$. Therefore, $\g=\F d \oplus V \oplus \F \hslash$ is the Heisenberg Lie algebra extended by a derivation.
\end{proof}

Let $\g$ be a vector space endowed with a bilinear form. For every $x \in \g$, define $B^{\flat}:\g \to \g^{\ast}$ by $B^{\flat}(x)(y)$ for all $y \in \g$. If $B$ is non-degenerate, then $B^{\flat}$ is an invertible map and we consider its inverse map denoted by $B^{\sharp}:\g^{\ast} \to \g$. Then, if $B^{\sharp}(\alpha)=x$, where $\alpha$ is in $\g^{\ast}$ and $x$ is in $\g$, then $\alpha(y)=B^{\flat}(x,y)$ for all $y \in \g$.
\smallskip

In this work we aboard the problem of determining the structure of a quadratic Lie algebra $\g$ for which its nilradical is the Heisenberg Lie algebra $\h_m$. In this case, the quotient $\g/\h_m$ turns out to be a reductive Lie algebra and therefore, is quadratic. The following result deals with this situation in a more general settling.

\begin{Theorem}\label{teorema ind}{\sl
Let $(\g,[\,\cdot\,,\,\cdot\,],B)$ be a quadratic Lie algebra. Then, the quotient $\g/\h_m$ admits an invariant metric if and only if there exists a subspace $\a$ of $\g$ such that
\smallskip

$\bullet$ $\g=\a \oplus \h_m$.
\smallskip

$\bullet$ $\a$ is a subalgebra of $\g$, i.e., $[\a,\a] \subset \a$.
}
\end{Theorem}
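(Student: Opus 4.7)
My plan is to prove each direction separately, leveraging the canonical structural decomposition provided by Lemma~\ref{lema-aux}.

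For the sufficiency direction, suppose $\a\subset\g$ is a subalgebra with $\g=\a\oplus\h_m$. The projection $\pi:\g\to\g/\h_m$ restricts to a Lie algebra isomorphism $\a\xrightarrow{\sim}\g/\h_m$, so it suffices to equip $\a$ with an invariant metric. The restriction $B|_{\a\times\a}$ is automatically symmetric and $\ad_\a$-invariant (since $[\a,\a]\subset\a$, no bracket ever escapes $\a$). The delicate point is non-degeneracy. I would exploit the freedom to modify $\a$ along $\h_m$: any other complementary subspace has the form $\{x+f(x):x\in\a\}$ for some linear $f:\a\to\h_m$, and such a modification preserves the subalgebra property when $f$ satisfies a suitable 1-cocycle compatibility condition. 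Positioning $\a$ inside the decomposition $\g=\S\oplus\F d\oplus V\oplus\F\hslash$ provided by Lemma~\ref{lema-aux}, one selects $f$ so that the resulting restriction of $B$ is non-degenerate, using the non-degeneracy of $B$ on the whole of $\g$.

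For the necessity direction, suppose $\bar B$ is an invariant metric on $\g/\h_m$. Identify $\g=\S\oplus\F d\oplus V\oplus\F\hslash$ via Lemma~\ref{lema-aux}, so that $\g/\h_m\cong\S\oplus\F\bar d$ with $[\bar d,a]=D(a)$ for $a\in\S$ and $\S$ appearing as a quadratic subalgebra with form $B_\S$. Setting $\lambda(a):=\bar B(\bar d,a)$, the invariance of $\bar B$ yields
$$
\bar B|_{\S\times\S}(D(a),b)=\lambda([a,b]_\S) \qquad\text{and}\qquad \lambda(D(a))=0.
$$
The strategy is to combine these identities with the non-degeneracy of $B_\S$ to produce an element $c\in\S$ with $D=\ad_\S(c)$, exhibiting $D$ as an inner derivation of $\S$. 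With $c$ in hand, set
$$
\a=\{a-B_\S(c,a)\hslash:a\in\S\}\oplus\F(d-c).
$$
A direct computation using \eqref{ecuaciones} together with the invariance of $B_\S$ and the relation $D=\ad_\S(c)$ shows that the $\hslash$-component of $[\,\cdot\,,\,\cdot\,]|_{\a\times\a}$ vanishes, so $\a$ is a subalgebra, and its complementarity with $\h_m$ is immediate from the construction.

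The main obstacle is the necessity direction, specifically extracting the inner-derivation description $D=\ad_\S(c)$ from the abstract invariance of $\bar B$. The subtlety is that $\bar B|_{\S\times\S}$ need not be non-degenerate, so one cannot directly represent $\lambda$ as $B_\S(c,\cdot)$ for some $c\in\S$; the correct $c$ must instead be identified through the interplay between $\bar B$ on the quotient and the ambient non-degenerate form $B_\S$ on $\S$, using the compatibility enforced by both forms being invariant under the respective brackets. This compatibility is also what drives the modification step in the sufficiency direction.
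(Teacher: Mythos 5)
Your sufficiency direction is workable but left vague at the one point that needs care. After normalizing $\a=\S\oplus\F d$ with $[\a,\a]\subset\S$, $B(\S,d)=B(\S,\hslash)=\{0\}$, $B(d,\hslash)=1$ (which forces $[d,\S]=\{0\}$), the paper does not try to make $B$ restrict non-degenerately to a complement at all: it simply defines a fresh metric $B_{\a}(a+\lambda d,b+\mu d)=B(a,b)+\lambda\mu$ on $\a\cong\g/\h_m$. Your plan of perturbing $\a$ by a map $f:\a\to\h_m$ until $B\vert_{\a\times\a}$ becomes non-degenerate can be completed (in the normalized position, replacing $d$ by $d+\gamma\hslash$ with $\gamma\neq0$ works), but you never exhibit $f$ nor verify that the subalgebra condition and non-degeneracy can be achieved simultaneously; as written this half is a sketch, not a proof.

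The genuine gap is in the necessity direction, at exactly the point you flag as ``the main obstacle.'' Your reduction is sound: the implication ``$\g/\h_m$ quadratic $\Rightarrow D=\ad_{\S}(c)$'' is true, and from such a $c$ one does get a complementary subalgebra, namely $\{a+B_{\S}(c,a)\hslash : a\in\S\}\oplus\F(d-c)$ (note the sign: invariance gives $[a,b]=[a,b]_{\S}+B_{\S}(c,[a,b]_{\S})\hslash$, so the correction must enter with a plus). But your two displayed identities do not yield the inner-ness of $D$. Writing $\bar{B}(a,b)=B_{\S}(\psi(a),b)$ with $\psi$ determined by the non-degeneracy of $B_{\S}$, invariance shows that $\psi$ lies in the centroid of $\S$ and that $\psi\circ D=\ad_{\S}(c')$ for some $c'\in\S$; if $\psi$ is invertible you are done, but $\bar{B}\vert_{\S\times\S}$ may be degenerate, $\psi$ may have a one-dimensional kernel, and then nothing in your proposal produces $c$. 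This degenerate case is where the paper does its real work: Lemmas \ref{lema 2}--\ref{lema 5} introduce the symmetric centroid-type operator $T$ satisfying $T\circ F=F\circ T=\ad_{\a}(e)$, prove $\dim\Ker(T)\le 1$ and $\Ker(F)=C_{\a}(e)$, and then run a descent on the minimal polynomial of $T$ to force $F$ (equivalently $D$) to be inner. Your proposal replaces this argument with the phrase ``interplay between $\bar{B}$ and $B_{\S}$,'' so the crux of the theorem is missing.
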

\begin{proof}
Suppose that there exists a subspace $\a$ such that $\g=\a \oplus \h_m$ and $[\a,\a] \subset \a$. We will find an invariant metric on $\a$. By Lemma \ref{lema-aux-0}.(ii) we assume that $\a=\S \oplus \F d$ where $B(\S,\hslash)=\{0\}$ and $B(d,\hslash)=1$. Using the same arguments as in the proof of Lemma \ref{lema-aux}.(iii), we also assume that $[\a,\a] \subset \S$ and $B(\S,d)=\{0\}$. Because $B$ restricted to $\S$ is non-degenerate, this implies that $[d,\S]=\{0\}$, hence $[d,\a]=\{0\}$. Now we define an invariant metric $B_{\a}$ on $\a$ by
$$
B_{\a}(a+\lambda d,b+\mu d)=B\vert_{\S \times \S}(a,b)+\lambda \mu\,\,\,\text{ for all }a,b \in \S\text{ and }\lambda,\mu \in \F.
$$

Now suppose that $\g/\h_m$ admits an invariant metric. We will find a complementary vector space to $\h_m$ that it is a subalgebra of $\g$.
\smallskip

Let $\a$ be a subspace complementary to $\h_m$ in $\g$, i.e, $\g=\a \oplus \h_m$. From Lemma \ref{lema-aux-0}.(v), there exists a Lie algebra structure $(\a,[\,\cdot\,,\,\cdot\,]_{\a})$ such that $\g/\h_m$ is isomorphic to $(\a,[\,\cdot\,,\,\cdot\,]_{\a})$ and the decomposition of the bracket $[\,\cdot\,,\,\cdot\,]$ of $\g$ given in \eqref{g15}, holds. In addition, from Lemma \ref{lema-aux-0}.(iv) we assume that $\a \subset V^{\perp}$. 
\smallskip

By hypothesis, $\g/\h_m$ admits an invariant metric, hence $(\a,[\,\cdot\,,\,\cdot\,]_{\a})$ is a quadratic Lie algebra. Let $B_{\a}$ be the invariant metric on $\a$.
\smallskip

\begin{Lemma}\label{lema 1}{\sl
There are linear maps $\varphi:\a \to \g$ and $\phi:\g \to \a$ such that
\smallskip

\textbf{(i)} $B_{\a}(a,b)=B(\varphi(a),b+x)$ for all $a,b \in \a$ and $x \in \g$.
\smallskip

\textbf{(ii)} $B(x,a)=B_{\a}(\phi(x),a)$ for all $a \in \a$ and $x \in \g$.
\smallskip

\textbf{(iii)} $\Ker(\phi)=\a^{\perp}$.
\smallskip

\textbf{(iv)} $\phi\circ \varphi=\Id_{\g}$.
\smallskip

\textbf{(v)} $\Im(\varphi)=\h_m^{\perp}$ and $\varphi([a,b]_{\a})=[\varphi(a),b]$ for all $a,b \in \a$.
}
\end{Lemma}
\begin{proof}

\textbf{(i)} Let $p:\g \to \a$ be the linear projection of $\g$ onto $\a$, i.e,  $p(a+x)=a$, for all $a \in \a$ and $x \in \h_m$. Let $p^{\ast}:\a^{\ast} \to \g^{\ast}$ be the dual map of $p$. Consider the map $\varphi:\a \to \g$ defined by $\varphi=B^{\sharp}\circ p^{\ast} \circ B_{\a}^{\flat}$. Then,
\begin{equation*}\label{varphi}
p^{\ast}(B_{\a}^{\flat}(a))=B^{\flat}(\varphi(a))\quad \text{ for all }a \in \a.
\end{equation*}
Hence for any $a,b \in \a$ and $x \in \h_m$ we have
\begin{equation}\label{varphi-2}
\begin{split}
B(\varphi(a),b+x)&=B^{\flat}(\varphi(a))(b+x)=p^{\ast}(B^{\flat}_{\a}(a))(b+x)\\
\,&=B^{\flat}_{\a}(a)(p(b+x))=B_{\a}(a,b)
\end{split}
\end{equation}
From we deduce that $\Im(\varphi) \subset \h_m^{\perp}$. 
\smallskip

\textbf{(ii)-(iii)} Let $\iota:\a \to \g$ be the inclusion map. Let $\iota^{\ast}:\g^{\ast} \to \a^{\ast}$ be the dual map of $\iota$. Consider the map $\phi:\g \to \a$ defined by $\phi=B_{\a}^{\sharp} \circ \iota^{\ast} \circ B^{\flat}$. Then,
\begin{equation*}\label{phi}
\iota^{\ast}(B^{\flat}(x))=B_{\a}^{\flat}(\phi(x))\quad \text{ for all }x \in \a.
\end{equation*}
Hence for any $x \in \g$ and $a \in \a$ we have
\begin{equation}\label{phi-2}
\begin{split}
B_{\a}(\phi(x),a)&=B_{\a}^{\flat}(\phi(x))(a)=\iota^{\ast}(B^{\flat}(x))(a)\\
\,&=B^{\flat}(x)(\iota(a))=B(x,a).
\end{split}
\end{equation}
Whence, $\Ker(\phi)=\a^{\perp}$. 
\smallskip

\textbf{(iv)} Let $a,b \in \a$, from \eqref{varphi-2} and \eqref{phi-2} we get
$$
B_{\a}(a,b)=B(\varphi(a),b)=B_{\a}(\phi(\varphi(a)),b).
$$
As $B_{\a}$ is non-degenerate, it follows $\phi \circ \varphi=\Id_{\a}$.
\smallskip

\textbf{(v)} Observe that $x-\varphi(\phi(x)) \in \Ker(\phi)=\a^{\perp}$ for all $x \in \g$. As $\Im(\varphi) \subset \h_m^{\perp}$ and $\a^{\perp}\cap \h_m^{\perp}=\{0\}$, we have
\begin{equation}\label{lun12-17}
\g=\Im(\varphi) \oplus \a^{\perp}.
\end{equation}
We claim that $\Im(\varphi)=\h_m^{\perp}$. Indeed, let $x$ be in $\h_m^{\perp}$. By \eqref{lun12-17}, there are $y \in \Im(\varphi)$ and $z \in \a^{\perp}$ such that $x=y+z$. Because $y \in \Im(\varphi) \subset \h_m^{\perp}$, then $z=x-y \in \a^{\perp}\cap \h_m^{\perp}=\{0\}$. Hence $x=y \in \Im(\varphi)$ and $\h_m^{\perp}=\Im(\varphi)$. This proves that $\Im(\varphi)$ is an ideal of $\g$. 
\smallskip

Now we shall prove that
\begin{equation*}\label{invariancia-phi-0}
\varphi([a,b]_{\a})=[\varphi(a),b]=[a,\varphi(b)]\quad \text{ for all }a,b \in \a.
\end{equation*}
Since $\Im(\varphi)$ is an ideal of $\g$ and $\varphi$ is injective, for each pair $a,b \in \a$ there is a unique element $x \in \a$ such that $[\varphi(a),b]=\varphi(x)$. Let $c \in \a$ be an arbitrary element, then
\begin{equation*}\label{lun12-31}
\begin{split}
B([\varphi(a),b],c)&=B(\varphi(a),[b,c])=B(\varphi(a),[b,c]_{\a})\\
\,&=B_{\a}(a,[b,c]_{\a})=B_{\a}([a,b]_{\a},c).
\end{split}
\end{equation*}
Whereas, $B([\varphi(a),b],c)=B(\varphi(x),c)=B_{\a}(x,c)$. As $B_{\a}$ is non-degenerate, we deduce that $x=[a,b]_{\a}$ and $\varphi([a,b]_{\a})=[\varphi(a),b]$.
\end{proof}

\begin{Lemma}\label{lema 2}{\sl
There are a linear maps $F,T:\a \to \a$ such that
\smallskip

\textbf{(i)} $B_{\a}(T(a),b)=B_{\a}(a,T(b))$ for all $a,b \in \a$.
\smallskip

\textbf{(ii)} $[a,b]=[a,b]_{\a}+B_{\a}(F(a),b)\hslash$ for all $a,b \in \a$.
\smallskip

\textbf{(iii)} $F$ is a derivation of $(\a,[\,\cdot\,,\,\cdot\,]_{\a})$.
\smallskip

\textbf{(iv)} $T([a,b]_{\a})=[T(a),b]_{\a}$ for all $a,b \in \a$.
\smallskip

\textbf{(v)} $T \circ F=F \circ T=\ad_{\a}(e^{\prime})$ for some element $e^{\prime} \in \a$.
\smallskip

\textbf{(vi)} $\Im(T) \subset V^{\perp}$.}
\end{Lemma}
\begin{proof}

\textbf{(i)} As $\Im(\varphi) \subset \g=\a \oplus \h_m$, for every $a \in \a$, let $T(a) \in \a$ and $S(a) \in \h_m$ be such that
\begin{equation}\label{descomposicion-varphi}
\varphi(a)=T(a)+S(a).
\end{equation}
We denote by $T:\a \to \a$ the linear map $a \mapsto T(a)$. Similarly, we denote by $S:\a \to \h_m$ the linear map $a \mapsto S(a)$.
\smallskip

We claim that $T$ satisfies $B_a(T(a),b)=B_{\a}(a,T(b))$ for all $a,b \in \a$. Indeed, by Lemma \ref{lema 1}.(i)-(v) we get
$$
B_{\a}(T(a),b)=B(T(a),\varphi(b))=B(\varphi(a),\varphi(b)).
$$ 
Similarly we prove $B_{\a}(a,T(b))=B(\varphi(a),\varphi(b))$. Hence $B_a(T(a),b)=B_{\a}(a,T(b))$.
\smallskip

\textbf{(ii)-(iii)} Let us consider the bilinear map $\mu:\a \times \a \to \F$ appearing in
$$
[a,b]=[a,b]_{\a}+\mu(a,b)\F \hslash\quad \text{ for all }a,b \in \a. 
$$
(see \eqref{g15}). As $B_{\a}$ is non-degenerate, there is a linear map $F:\a \to \a$ such that $\mu(a,b)=B_{\a}(F(a),b)$ for all $a,b \in \a$. Then,
\begin{equation}\label{a-descomposicion-corchete}
[a,b]=[a,b]_{\a}+B_{\a}(F(a),b)\hslash\quad \text{ for all }a,b \in \a.
\end{equation}
Using that $\hslash \in Z(\g)$ (see Lemma \ref{lema-aux-0}.(i)), we have
$$
[a,[b,c]]=[a,[b,c]_{\a}]_{\a}+B_a(F(a),[b,c]_{\a})\hslash\quad \text{ for all }a,b,c \in \a.
$$
The Jacobi identity implies that $F$ is a derivation of $(\a,[\,\cdot\,,\,\cdot\,]_{\a})$. 
\smallskip

\textbf{(iv)-(v)} Consider the linear map $S:\a \to \h_m$ appearing in \eqref{descomposicion-varphi}. Since $\Im(S) \subset V \oplus \F \hslash=\h_m$, there are a linear map $S_V:\a \to V$ and an element $e^{\prime} \in \a$ such that $S(a)=S_V(a)+B_{\a}(e^{\prime},a)\hslash$. By \eqref{descomposicion-varphi} it follows that $\varphi(a)=T(a)+S_V(a)+B_{\a}(e^{\prime},a)\hslash$ for all $a \in \a$. 
\smallskip

We claim that $S_V=0$. Indeed, let $a \in \a$ and $u \in V$ be arbitrary elements. From Lemma \ref{lema-aux-0}.(iv) and Lemma \ref{lema 1}.(v) we have $0=B(\varphi(a),u)=B(S_V(a),u)$. As $V$ is non-degenerate, (see Lemma \ref{lema-aux-0}.(iii)), we deduce that $S_V=0$. So, $\varphi$ can be written as
\begin{equation}\label{desc-varphi-3}
\varphi(a)=T(a)+B_{\a}(e^{\prime},a)\hslash\quad \text{ for all }a \in \a.
\end{equation}
\smallskip

Using that $\varphi([a,b]_{\a})=[\varphi(a),b]$ and \eqref{desc-varphi-3}, we get
\begin{align}
\label{T-centroide}& T([a,b]_{\a})=[a,T(b)]_{\a}=[T(a),b]_{\a}\,\,\text{ and }\\
\label{F-derivation}& T \circ F=F \circ T=\ad_{\a}(e^{\prime})\,\,\text{ for all }a,b \in \a.
\end{align}

\textbf{(vi)} Finally, as $\hslash \subset \h_m^{\perp}$ (see Lemma \ref{lema-aux-0}.(ii)) and $\Im(\varphi)=\h_m^{\perp} \subset V^{\perp}$ (see Lemma \ref{lema 1}.(v)), from \eqref{desc-varphi-3} we obtain $T(a)=\varphi(a)-B_{\a}(e^{\prime},a)\hslash \subset h_m^{\perp}$, which proves that $\Im(T) \subset h_m^{\perp}$.
\end{proof}

\begin{Lemma}\label{lema 3}{\sl
There is an element $e \in \a$ such that
\smallskip

\textbf{(i)} $a=T(\phi(a+x))+B(a+x,\hslash)e$ for all $a \in \a$ and $x \in \h_m$.
\smallskip

\textbf{(ii)} $B(e,\hslash)=1$.
\smallskip

\textbf{(iii)} $T \circ F=F \circ T=\ad_{\a}(e)$.
\smallskip

\textbf{(iv)} $T(\phi(e))=0$.
}
\end{Lemma}
\begin{proof}
The linear map $\psi:\a^{\perp} \to \h_m^{\ast}$ defined by 
$$
\psi(x)(y)=B(x,y)\quad \text{ for all } x \in \Ker(\phi)=\a^{\perp} \text{ and }y \in \h_m,
$$
is a vector space isomorphism because $B$ is non-degenerate and $\hslash \subset \h_m^{\perp}$ (see Lemma \ref{lema-aux-0}.(ii)). 
\smallskip

Let $\{u_1,\ldots,u_{2m},u_{2m+1}\}$ be a basis of $\h_m$, where $u_j$ is in $V$ for all $j \in \{1,\ldots,2m\}$ and $u_{2m+1}=\hslash$. Then there is a basis 
$$
\{e_1+w^{\prime}_1,\ldots,e_{2m+1}+w^{\prime}_{2m+1}\}\subset \a^{\perp} \subset \g=\a \oplus \h_m
$$
of $\a^{\perp}$, where $e_i$ is in $\a$ and $w^{\prime}_i$ is in $\h_m$, such that
$$
\psi(e_i+w^{\prime}_i)(u_j)=B(e_i+w^{\prime}_i,u_j)=\delta_{ij}\quad \text{ for all }i,j \in \{1,\ldots,2m+1\}.
$$

Let $e_{2m+1}=e \in \a$, then $B(e,\hslash)=1$. We shall prove the following
\begin{equation}\label{descomposicion-a}
a=T(\phi(a+x))+B(a+x,\hslash)e\quad \text{ for all }a \in \a\text{ and }x \in \h_m.
\end{equation}
Indeed, let $a,b \in \a$. Using that $\a \subset V^{\perp}$ (see Lemma \ref{lema-aux-0}.(iv)), Lemma \ref{lema 2}.(i) and \eqref{desc-varphi-3}, we get
$$
\aligned
& B_{\a}(a,b)=B(a,\varphi(b))=B(a,T(b))+B_{\a}(b,e^{\prime})B(a,\hslash)\\
&=B_{\a}(\phi(a),T(b))+B_{\a}(b,B(a,\hslash)e^{\prime})\\
&=B_{\a}(T(\phi(a))+B(a,\hslash)e^{\prime},b).
\endaligned
$$
Because $B_{\a}$ is non-degenerate, we obtain
\begin{equation}\label{comb1}
a=T(\phi(a))+B(a,\hslash)e^{\prime},\quad \text{ for all }a \in \a.
\end{equation}
On the other hand, by Lemma \ref{lema 1}.(v) and Lemma \ref{lema 2}.(i), it follows
\begin{equation*}\label{comb2}
\begin{split}
0&=B(\varphi(a),x)=B(T(a),x)=B_{\a}(T(a),\phi(x))\\
\,&=B_{\a}(a,T(\phi(x)))\quad \text{ for all }x \in \h_m.
\end{split}
\end{equation*}
Hence, $T(\phi(x))=0$ for all $x \in \h_m$, as $B_{\a}$ is non-degenerate. Using this result and the fact that $\hslash \in \h_m^{\perp}$ (see Lemma \ref{lema-aux-0}.(ii)), from \eqref{comb1} we deduce
\begin{equation}\label{descomposicion-a-2}
a=T(\phi(a+x))+B(a+x,\hslash)e^{\prime}\quad \text{ for all }a \in \a\text{ and }x \in \h_m.
\end{equation}

In particular, if we replace $a+x$ by $e+w^{\prime}_{2m+1} \in \Ker(\phi)$ in \eqref{descomposicion-a-2}, we obtain $e=e^{\prime}$, proving \eqref{descomposicion-a}. In addition we get $T \circ F=F \circ T=\ad_{\a}(e)$. If we apply $\phi$ in \eqref{comb1} we obtain
$$
\phi(a)=\phi(T(\phi(a)))+B(a,\hslash)\phi(e)\quad \text{ for all }a \in \a.
$$
In particular, for $a=e$ we get $\phi(T(\phi(e)))=0$. Hence, by Lemma \ref{lema 2}.(vi), $T(\phi(e)) \in \Ker(\phi)\cap \h_m^{\perp}=\a^{\perp}\cap \h_m^{\perp}=\{0\}$, proving that $\phi(e)$ belongs to $\Ker(T)$.
\end{proof}

\begin{Lemma}\label{lema 3-1}{\sl
$\dim \Ker(T) \leq 1$ and $\Ker(T) \subset Z(\a)$.}
\end{Lemma}
\begin{proof}
From Lemma \ref{lema 3}.(i) we deduce that $\dim \Im(T)\geq \dim \g-1$, then $\dim \Ker(T) \leq 1$. As $\Ker(T)$ is an ideal of $\a$ (see Lemma \ref{lema 2}.(iv)) and $\a$ admits an invariant metric, it follows that $\Ker(T) \subset Z(\g)$. In particular, $\phi(e)$ belongs to $Z(\a)$.
\end{proof}

\begin{Lemma}\label{lema 4}{\sl
$\,$

\textbf{(i)} For all $x \in \g$ and $a \in \a$, the following holds
\begin{equation}\label{invariancia-phi}
\phi([x,a])=[\phi(x),a]_{\a}+B(x,\hslash)F(a).
\end{equation}
\smallskip

\textbf{(ii)} $\Ker(F)=C_{\a}(e)=\{a \in \a\mid [a,e]_{\a}=0\}$.
}
\end{Lemma}
\begin{proof}
\textbf{(i)} Let $x \in \g$ and $a,b \in \a$. From Lemma \ref{lema 1}.(ii) and \eqref{a-descomposicion-corchete}, it follows
$$
\aligned
& B_{\a}(\phi([x,a]),b)=B([x,a],b)=B(x,[a,b])\\
&=B(x,[a,b]_{\a})+B(x,B_{\a}(F(a),b)\hslash)\\
&=B_{\a}([\phi(x),a]_{\a},b)+B_{\a}(B(x,\hslash)F(a),b).
\endaligned
$$
Using that $B_{\a}$ is non-degenerate, we get \eqref{invariancia-phi}. 
\smallskip

\textbf{(ii)} In particular, for $x=e$ we obtain $\phi([e,a])=[\phi(e),a]_{\a}+F(a)=F(a)$, as $\phi(e) \in Z(\a)$. Then, $F(e)=0$. By \eqref{a-descomposicion-corchete}, $[e,a]=[e,a]_{\a} \in \a$ and $F(a)=\phi([e,a]_{\a})$ for all $a \in \a$, which implies that $C_{\a}(e) \subset \Ker(F)$. Let $a \in \Ker(F)$, then 
$$
[e,a]_{\a} \in \Ker(\phi) \cap \a \subset \a^{\perp} \cap V^{\perp}.
$$
because $\Ker(\phi)=\a^{\perp}$ and $\a \subset V^{\perp}$. As $[e,a]=[e,a]_{\a}$, $[e,a]$ belongs to $(\F \hslash)^{\perp}$, then $[e,a]_{\a} \in \a^{\perp} \cap V^{\perp} \cap (\F \hslash)^{\perp}=\{0\}$ because $\g=\a \oplus V \oplus \F \hslash$. Therefore, $C_{\a}(e)=\Ker(F)$.
\end{proof}

\begin{Lemma}\label{lema 5}{\sl
$F$ is an inner derivation of $(\a,[\,\cdot\,,\,\cdot\,]_{\a})$.}
\end{Lemma}
\begin{proof}
If $T=0$, by Lemma \ref{lema 3-1} we deduce that $\dim \g=1$ and therefore, $F$ is an inner derivation. Suppose that $T \neq 0$. Let 
$$
m=X^r+\gamma_1 X^{r-1}+\ldots \gamma_1X+\gamma_0 \in \F[X],\quad \gamma_j \in \F
$$
be the minimal polynomial of $T$. Take an element $a\in\a$. From $m(T)(F)(a)=0$ and $F \circ T=T \circ F=\ad_{\a}(e)$ we get
\begin{equation}\label{inner1}
0=[T^{r-1}(e),a]+[\gamma_1 T^{r-2}(e),a]+\ldots+[\gamma_1 e,a]+\gamma_0 F(a).
\end{equation}
If $\gamma_0 \neq 0$, from \eqref{inner1} we deduce that $F$ is an inner derivation. Suppose that $\gamma_0=0$ and let $\tilde{m_1}=X^{r-1}+\gamma_1 X^{r-2}+\ldots \gamma_2 X+\gamma_1 \in \F[X]$. By \eqref{inner1} it follows $[e,\tilde{m_1}(T)(a)]=0$, which means that $\tilde{m_1}(T)(a) \in C_{\a}(e)=\Ker(F)$. Hence, from $F(\tilde{m_1}(T)(a))=0$ we obtain
\begin{equation}\label{inner2}
0=[T^{r-2}(e),a]+[\gamma_1 T^{r-3}(e),a]+\ldots+[\gamma_2 e,a]+\gamma_1 F(a).
\end{equation}
If $\gamma_1 \neq 0$, from \eqref{inner2} we deduce that $F$ is an inner derivation. Suppose that $\gamma_1=0$ and let $\tilde{m_2}=X^{r-2}+\gamma_1 X^{r-3}+\ldots \gamma_3 X+\gamma_2 \in \F[X]$. By \eqref{inner2} it follows $[e,\tilde{m_2}(T)(a)]=0$, which means that $\tilde{m_2}(T)(a) \in C_{\a}(e)=\Ker(F)$. Repeating this process, we obtain an element $\gamma_{k} \neq 0$ for which
$$
0=[T^{r-k}(e),a]+[\gamma_1 T^{r-k}(e),a]+\ldots+[\gamma_{k+1} e,a]+\gamma_k F(a).
$$
Then, $F$ is an inner derivation and $F=\ad_{\a}(c)$ for an element $c \in \a$. 
\end{proof}

As $\mu(a,b)=B_{\a}(F(a),b)$, by Lemma \ref{lema-aux-0}.(v), the bracket $[\,\cdot\,,\,\cdot\,]$ of $\g$ takes the following form
\begin{equation*}
\begin{split}
\, \forall\,a,b \in \a, \quad \,&[a,b]=[a,b]_{\a}+B_{\a}(c,[a,b]_{\a})\hslash,\\
\, \forall\,a \in \a,\, v \in V, \quad &[a,v]=\rho(x)(v),\\
\, \forall\,u,v \in V,\quad &[u,v]=\omega(u,v)\hslash.
\end{split}
\end{equation*}
In the vector space $\a \oplus \h_m$, consider the bracket $[\,\cdot\,,\,\cdot\,]^{\prime}$ defined by
\begin{equation*}
\begin{split}
\, \forall\,a,b \in \a, \quad \,&[a,b]^{\prime}=[a,b]_{\a},\\
\, \forall\,a \in \a,\, x \in \h_m, \quad &[a,x]^{\prime}=[a,x],\\
\, \forall\,x,y \in \h_m,\quad & [x,y]^{\prime}=[x,y].
\end{split}
\end{equation*}
Observe that $\a$ is a subalgebra of $(\a \oplus \h_m,[\,\cdot\,,\,\cdot\,]^{\prime})$. For $a \in \a$ and $x \in \h_m$, the linear map $a \mapsto a+B_{\a}(c,a)\hslash$ and $x \mapsto x$ gives an isomorphism between $(\a \oplus \h_m,[\,\cdot\,,\,\cdot\,]^{\prime})$ and $(\g,[\,\cdot\,,\,\cdot\,])$. 
\end{proof}

\begin{Remark}{\sl
The fact that there is a complementary subspace to $\h_m$ in $\g$ that it is a subalgebra of $\g$, depends on the structure of $\h_m$. In other cases where $\g/\mathcal{I}$ admits an invariant metric and $\mathcal{I}$ is contained in the center, there is no subspace complementary to $\mathcal{I}$ that is a subalgebra of $\g$ (see \cite{Garcia}, Example 2.16).} 
\end{Remark}

\section{Heisenberg Lie algebras as the nilradical}

\begin{Theorem}\label{teorema 1}{\sl
Let $\g$ be a quadratic Lie algebra such that $\hil(\g)=\h_m$.  Then,
\smallskip

$\bullet$ $\operatorname{Rad}(\g)$ is isomorphic to the Heisenberg Lie algebra $\h_m$ extended by a derivation.
\smallskip

$\bullet$ $\operatorname{Rad}(\g)$ is a non-degenerate ideal of $\g$. 
}
\end{Theorem}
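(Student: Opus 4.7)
The plan is to use the decomposition $\g = \S \oplus \F d \oplus \h_m$ furnished by Lemma \ref{lema-aux}, where $\F d \oplus \S \oplus \F \hslash$ is a double extension of the quadratic Lie algebra $\S$ by a derivation $D$, and $[\S,\h_m]=\{0\}$. The crux is to show that the hypothesis $\hil(\g)=\h_m$ forces $\S$ to be semisimple. Once this is established, $D$ is inner on $\S$, and a small shift of $d$ exhibits $\operatorname{Rad}(\g)$ as an ideal complementary to $\S$ and isomorphic to the Heisenberg Lie algebra extended by a derivation.

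First I would prove $\S$ is semisimple. Let $\hil(\S)$ denote the nilradical of $\S$. Since derivations preserve the nilradical, $D(\hil(\S))\subset \hil(\S)$. Using the bracket description from \eqref{ecuaciones}, namely $[a,b]=[a,b]_\S+B_\S(D(a),b)\hslash$ for $a,b\in\S$, together with $[\S,\h_m]=\{0\}$, $[d,\h_m]\subset\h_m$, and $[\h_m,\h_m]\subset \F\hslash$, one checks that $N:=\hil(\S)\oplus\h_m$ is an ideal of $\g$. An induction showing that the lower central series of $N$ satisfies $N^k\subset\hil(\S)^k+\F\hslash$, combined with the nilpotency of $\hil(\S)$ inside $\S$, yields $N^k\subset\F\hslash$ for $k$ large, and hence $N^{k+1}=\{0\}$. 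Therefore $N$ is a nilpotent ideal of $\g$, so $N\subset\hil(\g)=\h_m$; since $\S\cap\h_m=\{0\}$, this forces $\hil(\S)=\{0\}$, whence $\S$ is semisimple.

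Because $\S$ is semisimple, there exists $s_0\in\S$ with $D=\ad_\S(s_0)$. Set $d':=d-s_0$. Using $D(s_0)=[s_0,s_0]_\S=0$, the bracket formula gives $[s_0,a]=D(a)$ in $\g$ for all $a\in\S$, while the double extension structure gives $[d,a]=D(a)$, so $[d',a]=0$. Since $[\S,V]=\{0\}$, also $[s_0,u]=0$ for $u\in V$, so $[d',u]=\phi(u)$, where $\phi=\rho(d)|_V$ is the derivation identified in Lemma \ref{lema X}.(iii). Thus $\F d'\oplus\h_m$ centralizes $\S$ and is an ideal of $\g$ whose Lie algebra structure coincides with that of the Heisenberg Lie algebra extended by $\phi$. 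This yields the direct-sum decomposition $\g=\S\oplus(\F d'\oplus\h_m)$ of $\g$ as a sum of a semisimple ideal and a solvable one, so $\operatorname{Rad}(\g)=\F d'\oplus\h_m\cong\h_m(\phi)$.

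Non-degeneracy of $\operatorname{Rad}(\g)$ then follows from a standard invariance argument: for $[a,b]\in[\S,\S]=\S$ and $r\in\operatorname{Rad}(\g)$, $B([a,b],r)=B(a,[b,r])=0$ because $[\S,\operatorname{Rad}(\g)]=\{0\}$. Hence $\S\subset\operatorname{Rad}(\g)^\perp$, and equality of dimensions forces $\operatorname{Rad}(\g)=\S^\perp$, which is non-degenerate because $\S$ is. The main obstacle will be the nilpotency argument for $N$, where one must keep careful track of the $\F\hslash$-components produced by the brackets in $\S$ and in $\h_m$ so that the descending central chain of $N$ indeed terminates at zero.
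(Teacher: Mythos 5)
Your first half is correct and follows a genuinely different route from the paper. The paper's proof observes that $\g/\hil(\g)$ is reductive, hence quadratic, invokes Theorem \ref{teorema ind} to obtain a complementary \emph{subalgebra} $\a=\S\oplus\F d$, uses reductivity of $\a$ to place $d$ in $Z(\a)$ (so that $\S$ becomes a non-degenerate ideal), and then identifies $\operatorname{Rad}(\g)=\F d\oplus\h_m$ by squeezing it between $\hil(\g)$ and $\F d\oplus\hil(\g)$. You instead work directly with Lemma \ref{lema-aux} and extract semisimplicity of $\S$ from $\hil(\g)=\h_m$ via the nilpotent ideal $N=\hil(\S)\oplus\h_m$; that argument is sound (the induction $N^k\subset\hil(\S)^k+\F\hslash$ closes because $[\hil(\S),\h_m]=\{0\}$ and $[N,\F\hslash]=\{0\}$), and it has the merit of bypassing Theorem \ref{teorema ind} entirely.

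There is, however, a gap in the second half. Knowing only that $D=\ad_{\S}(s_0)$ with possibly $s_0\neq 0$ (Lemma \ref{lema-aux} does not guarantee $D=0$, and decompositions with $s_0\neq 0$ genuinely occur), the subspace $\S$ is \emph{not} a subalgebra of $\g$: the bracket $[a,b]=[a,b]_{\S}+B_{\S}(D(a),b)\hslash$ has a nonzero $\hslash$-component. So the claimed decomposition of $\g$ as a sum of the semisimple ideal $\S$ and the solvable ideal $\F d'\oplus\h_m$ is not literally available, and since $B(a,d')=-B_{\S}(a,s_0)$, the inclusion $\S\subset\operatorname{Rad}(\g)^{\perp}$ fails whenever $s_0\neq 0$. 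The repair is to replace $\S$ by its twist $\tilde\S=\{a+B_{\S}(s_0,a)\hslash\mid a\in\S\}$: since $B_{\S}(D(a),b)=B_{\S}(s_0,[a,b]_{\S})$ by invariance, the map $a\mapsto a+B_{\S}(s_0,a)\hslash$ is a Lie algebra isomorphism of $\S$ onto $\tilde\S$, and $\tilde\S$ is an ideal of $\g$ that commutes with and is orthogonal to $\F d'\oplus\h_m$, and is isometric to $(\S,B_{\S})$ because $B(\hslash,\S)=B(\hslash,\hslash)=0$. (Your invariance computation in fact shows $[\S,\S]\subset\operatorname{Rad}(\g)^{\perp}$, and the span of $[\S,\S]$ is $\tilde\S$, not $\S$.) With $\tilde\S$ in place of $\S$ both conclusions go through verbatim: $\g=\tilde\S\oplus(\F d'\oplus\h_m)$ is a direct sum of a semisimple and a solvable ideal, so $\operatorname{Rad}(\g)=\F d'\oplus\h_m\cong\h_m(\phi)$, and $\operatorname{Rad}(\g)^{\perp}=\tilde\S$ is non-degenerate.
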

\begin{proof}
Since $\g/\hil(\g)$ is reductive, then it is a quadratic Lie algebra. By Theorem \ref{teorema ind}, there is a vector subspace $\a$ of $\g$ such that $[\a,\a] \subset \a$ and $\g=\a \oplus \h_m$. 
\smallskip

We may assume that $\a=\S \oplus \F d$, where $[\a,\a]_{\a} \subset \S$. Let us consider the decomposition of the bracket $[\,\cdot\,,\,\cdot\,]$ of $\g$ given in \eqref{corchete-extension}. As $\a$ is reductive, it follows that $d$ belongs to the center of $\a$. Therefore, $\S$ is a non-degenerate ideal of $\g$, where $\S^{\perp}=\F d \oplus \h_m$. 
\smallskip

Now we shall prove that $\operatorname{Rad}(\g)=\F d \oplus V \oplus \F \hslash$. Observe that $\F d \oplus V \oplus \F \hslash$ is solvable because
$$
[\F d \oplus V \oplus \F \hslash,\F d\oplus V \oplus \F \hslash]=\h_m
$$
and $\h_m=\hil(\g)$ is nilpotent. Then,
\begin{equation*}\label{descNov8}
\hil(\g) \subset \operatorname{Rad}(\g) \subset \F d \oplus \h_m=\F d \oplus \hil(\g).
\end{equation*}
Whence,
$$
\dim \hil(\g) < \dim \operatorname{Rad}(\g) \leq 1+\dim \hil(\g).
$$
Therefore, $\operatorname{Rad}(\g)=\F d \oplus \h_m=\F d \oplus V \oplus \F \hslash$. By Lemma \ref{lema X}.(iii), $\operatorname{Rad}(\g)$ is isomorphic to the Heisenberg Lie algebra $\h_m$ extended by a derivation.
\end{proof}

\begin{Cor}\label{cor}{\sl
$\,$

\textbf{(i)} Let $\g$ be an indecomposable quadratic Lie algebra. If $\hil(\g)$ is the Heisenberg Lie algebra, then $\g$ is the Heisenberg Lie algebra extended by a derivation.
\smallskip

\textbf{(ii)} Let $\g$ be a quadratic solvable Lie algebra. If $\,\hil(\g)$ is the Heisenberg Lie algebra, then $\g$ is the Heisenberg Lie algebra extended by a derivation.
}
\end{Cor}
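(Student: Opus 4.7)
The plan is to deduce both statements as direct consequences of Theorem \ref{teorema 1}, together with the standard fact that in a quadratic Lie algebra any non-degenerate ideal splits off as an orthogonal direct summand.

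For part \textbf{(ii)}, I would simply observe that if $\g$ is solvable then $\operatorname{Rad}(\g)=\g$. Since $\hil(\g)=\h_m$, Theorem \ref{teorema 1} tells us that $\operatorname{Rad}(\g)$ is isomorphic to $\h_m$ extended by a derivation, and therefore so is $\g$. There is nothing more to do.

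For part \textbf{(i)}, I would invoke Theorem \ref{teorema 1} to conclude that $\operatorname{Rad}(\g)$ is a \emph{non-degenerate} ideal of $\g$, and is isomorphic to $\h_m(\phi)$ for some derivation $\phi$. Since $B$ is non-degenerate on $\operatorname{Rad}(\g)$, standard quadratic-Lie-algebra theory gives that $\operatorname{Rad}(\g)^{\perp}$ is again an ideal, that $\g=\operatorname{Rad}(\g)\oplus \operatorname{Rad}(\g)^{\perp}$ as vector spaces with $B$ non-degenerate on each summand, and that $[\operatorname{Rad}(\g),\operatorname{Rad}(\g)^{\perp}]\subset \operatorname{Rad}(\g)\cap \operatorname{Rad}(\g)^{\perp}=\{0\}$. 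Thus the decomposition is an orthogonal direct sum of ideals. Indecomposability of $\g$ forces one of the two summands to be zero; since $\h_m \subset \operatorname{Rad}(\g)$ is non-zero, the vanishing summand must be $\operatorname{Rad}(\g)^{\perp}$, so $\g=\operatorname{Rad}(\g)\cong \h_m(\phi)$.

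There is no real obstacle here: the heavy lifting has already been done in Theorem \ref{teorema 1} (through Theorem \ref{teorema ind} and Lemma \ref{lema X}.(iii)). The only point worth double-checking is the orthogonal splitting argument in (i), specifically that $\operatorname{Rad}(\g)^{\perp}$ really is an ideal and commutes with $\operatorname{Rad}(\g)$; this follows immediately from the invariance of $B$ (for ideal-ness) and from $\operatorname{Rad}(\g)\cap \operatorname{Rad}(\g)^{\perp}=\{0\}$ (for commutation), both of which are standard facts for non-degenerate ideals in quadratic Lie algebras.
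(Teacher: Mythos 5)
Your proof is correct and follows exactly the route the paper intends: the corollary is stated without proof as an immediate consequence of Theorem \ref{teorema 1}, with (ii) reducing to $\operatorname{Rad}(\g)=\g$ and (i) using the standard orthogonal splitting of the non-degenerate ideal $\operatorname{Rad}(\g)$ together with indecomposability. The splitting facts you flag (that $\operatorname{Rad}(\g)^{\perp}$ is an ideal and that the two summands commute) are indeed the only details worth writing out, and your justifications of them are sound.
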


\section*{Acknowledgements}

The author acknowledges the support provided by a post-doctoral fellowship CONAHCYT grant 769309.
\smallskip

\subsection*{Disclosure statement}
The author declares that he has no relevant financial interests that relate to the research described in this paper.

\bibliographystyle{amsalpha}

\end{document}